%
%

\documentclass{amsart}
\usepackage{times}
\usepackage{dsfont,hyperref}
\usepackage[all]{xy}

\hypersetup{
    pdftoolbar=true,
    pdfmenubar=true,
    pdffitwindow=false,
    pdfstartview={FitH},
    pdftitle={On the equivariant implicit
function theorem with low regularity and applications to geometric variational problems},
    pdfauthor={Renato G. Bettiol, Paolo Piccione, and Gaetano Siciliano},
    pdfsubject={46T05, 47J07, 58C15, 58D19},
    pdfkeywords={}
    pdfnewwindow=true,
    colorlinks=true, 
    linkcolor=blue,
    citecolor=blue,
    urlcolor=blue,
}

\newcommand{\im}{\mathrm{Im}\,}

\author{Renato G. Bettiol}
\author{Paolo Piccione}
\author{Gaetano Siciliano}

\address{
\begin{tabular}{lll}
University of Notre Dame & &Universidade de S\~ao Paulo \\
Department of Mathematics & & Departamento de Matem\'atica \\
255 Hurley Building & & Rua do Mat\~ao, 1010 \\
Notre Dame, IN, 46556-4618, USA & & S\~ao Paulo, SP, 05508-090, Brazil\\
\emph{E-mail address}: {\tt rbettiol@nd.edu} & & \emph{E-mail address}: {\tt piccione@ime.usp.br}\\
\end{tabular}
\bigskip
\hfill\break\hfill\indent
\begin{tabular}{lll}
Universidade Federal do ABC&&\\
Centro de Matem\'atica, Computa\c c\~ao e Cogni\c c\~ao &&\\
Rua Santa Ad\'elia 166&&\\
Santo Andr\'e, SP, 09210-170, Brazil&&\\
\emph{E-mail address}: {\tt gaetano.siciliano@gmail.com} & &
\end{tabular}
}

\title[On the $G$-equivariant implicit function theorem]{On the equivariant implicit
function theorem with low regularity and applications to geometric variational problems}
\date{November 12, 2012}
\thanks{The first named author is partially supported by the NSF grant DMS-0941615, USA. The second named author is partially supported by Fapesp, S\~ao Paulo, Brazil, Projeto Tem\'atico 2011/21362-2, and by CNPq, Brazil. The third named author is supported by Fapesp, S\~ao Paulo, Grant n.\ 2010/00068-6.}
\subjclass[2010]{46T05, 47J07, 58C15, 58D19}
\begin{document}

\theoremstyle{definition}\newtheorem*{defin*}{Definition}
\theoremstyle{plain}\newtheorem{teo}{Theorem}[section]
\theoremstyle{plain}\newtheorem{prop}[teo]{Proposition}
\theoremstyle{plain}\newtheorem{lem}[teo]{Lemma}
\theoremstyle{plain}\newtheorem{cor}[teo]{Corollary}
\theoremstyle{definition}\newtheorem{defin}[teo]{Definition}
\theoremstyle{remark}\newtheorem{rem}[teo]{Remark}
\theoremstyle{plain} \newtheorem{assum}[teo]{Assumption}
\theoremstyle{plain}\newtheorem{example}[teo]{Example}
\theoremstyle{plain} \newtheorem*{acknowledgement}{Acknowledgements}

\newtheorem{teon}{Theorem}
\renewcommand*{\theteon}{\Alph{teon}}

\newtheorem*{axiom}{Axiom}

\numberwithin{equation}{section}

\begin{abstract}
We prove an implicit function theorem for functions
on infinite-dimensional Banach manifolds, invariant under the (local) action of a finite
dimensional Lie group. Motivated by some geometric
variational problems,
we consider group actions that are not necessarily differentiable
everywhere, but only on some dense subset. Applications are discussed
in the context of harmonic maps, closed (pseudo-)Riemannian geodesics, and constant mean curvature
hypersurfaces.
\end{abstract}
\maketitle

\begin{section}{Introduction}
The implicit function theorem is an ubiquitous result from elementary multivariable
calculus courses to current pure and applied research problems. Being the literature on
the several formulations of the theorem virtually infinite, we will not attempt to
give an account of the extensive variety of statements available.
In this paper we formulate a version of the theorem for functions on Banach manifolds invariant under the action of a
finite-dimensional Lie group, which is not necessarily compact. Previous formulations
of the $G$-equivariant implicit function theorem, most notably by N. Dancer \cite{Dan0, Dan1, Dan2},
considered only \emph{linear} actions of groups on Banach spaces. This paper concerns two crucial improvements, namely:
\begin{itemize}
\item[(1)] the actions may be nonlinear;
\item[(2)] the action is only assumed to be by homeomorphisms, and possibly not everywhere differentiable.
\end{itemize}
Proving an implicit function theorem in such broad context is not just a matter
of abstract generality. Namely, as first noticed by R. Palais \cite{palais} and others in the 1960's,
the natural variational framework of several interesting
geometric problems involves functionals on Banach manifolds that are invariant under the
continuous (but not differentiable) action of a finite-dimensional Lie group of symmetries. Our result is
motivated precisely by this type of problem, that includes constant mean curvature (CMC)
embeddings, closed geodesics and harmonic maps, among other potential applications. A rough statement of our main abstract result is as follows.

\begin{teon}
Let $\mathfrak f\colon \mathfrak M\times\Lambda\to\mathds R$ be a map of class $\mathcal C^{k+1}$, $k\ge1$, where
$\mathfrak M$ and $\Lambda$ are (possibly infinite-dimensional) Banach manifolds, and assume that
for all $\lambda$, the functional $f(\cdot,\lambda)$ is invariant under the action of a finite-dimensional Lie group $G$
on $\mathfrak M$. Let $(x_0,\lambda_0)\in\mathfrak M\times\Lambda$ be such that
$\frac{\partial\mathfrak f}{\partial x}(x_0,\lambda_0)=0$. Assume the second variation $\frac{\partial^2\mathfrak f}{\partial x^2}(x_0,\lambda_0)$ is represented by a self-adjoint Fredholm operator and that the critical point $x_0$ is equivariantly nondegenerate, i.e.,
\[\ker\left(\frac{\partial^2\mathfrak f}{\partial x^2}(x_0,\lambda_0)\right)=T_{x_0}\big(G\cdot x_0\big).\]
Then, there exists a $\mathcal C^k$ map $x\colon U\subset\Lambda\to\mathfrak M$, defined in a neighborhood $U$
of $\lambda_0$ in $\Lambda$, with $x(\lambda_0)=x_0$, such that if $\lambda\in U$ and $y$ is sufficiently close
to the orbit $G\cdot x_0$, then $\frac{\partial\mathfrak f}{\partial x}(y,\lambda)=0$ if and only if $y$ belongs to the $G$-orbit
of $x(\lambda)$.
\end{teon}

The precise technical statement of the above result (see Theorem~\ref{thm:mainimplicitfunctiontheorem}) with all
detailed functional analytical conditions is cast in an abstract Banach vector bundle language, and given only later, in Section~\ref{sec:abstractGinvariantimplicit}, for the sake of exposition. Its formulation involves a set of axioms that describe a rather general setup to which the result applies. Axioms (A) describe the basic variational setup. Axiom (B) deals with the differentiability of the action,
and axiom (C) with the $G$-invariance of the set of critical points.
Axioms (D) give the existence of a \emph{gradient-like} map, and axioms (E) guarantee its equivariance.
Finally, axioms (F) provide a notion of continuity for the tangent space to the group
orbits.

Explicit applications to the above mentioned geometric variational problems
are discussed in Section~\ref{sec:applications}.
More precisely,
let us quote some of the deformation rigidity results that can be obtained as a direct consequence of our main theorem above:

\begin{teon}
Let $(\overline M,\overline{\mathbf g})$ be a smooth Riemannian manifold, let
$\mathbf g$ be a $\mathcal C^k$ Riemannian metric tensor on the manifold $M$, with $k\ge3$, and
let $\phi\colon M\to\overline M$ be a $(\mathbf g,\overline{\mathbf g})$-harmonic map, i.e., $\Delta_{\mathbf g,\overline{\mathbf g}}(\phi)=\mathrm{tr}(\widehat{\nabla}\mathrm d\phi)=0$.
Consider $\Lambda$ an open subset of a Banach space of
symmetric $(0,2)$-tensors of class $\mathcal C^k$ on $M$, with $\mathbf g\in\Lambda$,
such that every tensor in $\Lambda$ is a Riemannian metric tensor on $M$.
Suppose $\phi$ is nondegenerate, i.e., all Jacobi fields along $\phi$ are
of the form $\overline{K}\circ\phi$, where $\overline{K}$ is a Killing vector field of $\overline M$.
Then there exists a neighborhood $\mathcal U$ of $\mathbf g$ in
$\Lambda$, a neighborhood $\mathcal V$ of $\phi$ in $\mathcal C^{2,\alpha}(M,\overline M)$,
and a $\mathcal C^{k-1}$ function $\mathcal U\ni\mathbf h\mapsto\phi_{\mathbf h}\in\mathcal C^{2,\alpha}(M,\overline M)$
such that:
\begin{itemize}
\item[(a)] $\phi_{\mathbf h}$ is an $(\mathbf h,\overline{\mathbf g})$-harmonic map for all $\mathbf h\in\mathcal U$;
\item[(b)] if $\mathbf h\in\mathcal U$ and $\varphi\in\mathcal V$ is an
$(\mathbf h,\overline{\mathbf g})$-harmonic map, then $\varphi$ is geometrically equivalent to $\phi_\mathbf h$, i.e., there exists an isometry $\psi\in\mathrm{Iso}(\overline M,\overline{\mathbf g})$ such that $\varphi=\psi\circ\phi_\mathbf h$.
\end{itemize}
\end{teon}

\begin{teon}
Let $\mathbf g$ be a $\mathcal C^k$ (pseudo-)Riemannian metric tensor on the manifold $M$, with $k\ge3$, and
let $\gamma$ be a closed $\mathbf g$-geodesic on $M$.
Consider $\Lambda$ an open subset of a Banach space of
symmetric $(0,2)$-tensors of class $\mathcal C^k$ on $M$, with $\mathbf g\in\Lambda$,
such that every tensor in $\Lambda$ is a (pseudo-)Riemannian metric tensor on $M$.
Suppose $\gamma$ is nondegenerate, i.e., all periodic Jacobi fields along $\gamma$ are
(constant) multiples of the tangent field $\gamma'$. Then there exists a neighborhood
$\mathcal U$ of $\mathbf g$ in $\Lambda$, a neighborhood $\mathcal V$ of $\gamma$ in $\mathcal C^2(\mathds S^1,M)$,
and a $\mathcal C^{k-1}$ function $\mathcal U\ni\mathbf h\mapsto\gamma_{\mathbf h}\in\mathcal C^2(\mathds S^1,M)$
 such that:
\begin{itemize}
\item[(a)] $\gamma_{\mathbf h}$ is a closed $\mathbf h$-geodesic in $M$ for all $\mathbf h\in\mathcal U$;
\item[(b)] if $\mathbf h\in\mathcal U$ and $\alpha\in\mathcal V$ is a closed
$\mathbf h$-geodesic in $M$, then $\alpha$ is geometrically equivalent to $\gamma_\mathbf h$, i.e., $\alpha$ and $\gamma_\mathbf h$ have the same image (and the same number of turns).
\end{itemize}
\end{teon}

\begin{teon}
Let $x\colon M\hookrightarrow\overline M$ be a nondegenerate and transversely oriented
CMC embedding, with mean curvature $H_0$. Assume that there exists an invariant volume functional $\mathcal V$ defined in a neighborhood of $x$ in the set
of $\mathcal C^1$-embeddings of $M$ into $\overline M$. Then, there exists an open interval $\left]H_0-\varepsilon,H_0+\varepsilon\right[$ and a smooth
function $\left]H_0-\varepsilon,H_0+\varepsilon\right[\ni H\mapsto\varphi_H\in\mathcal C^{2,\alpha}(M)$, with
$\varphi_{H_0}=0$, such that:

\begin{itemize}
\item[(a)] for all $H\in\left]H_0-\varepsilon,H_0+\varepsilon\right[\,$, the map $x_H\colon M\hookrightarrow\overline M$
defined by \[x_H(p)=\exp_{x(p)}\big(\varphi_H(p)\cdot\vec n_x(p)\big),\qquad p\in M,\] is a CMC embedding having
mean curvature equal to $H$;
\smallskip

\item[(b)] any given CMC embedding $y\colon M\hookrightarrow\overline M$ sufficiently close to $x$
(in the $\mathcal C^{2,\alpha}$-topology) is isometrically congruent to some $x_H$.
\end{itemize}
\end{teon}

One of the main motivations for the development of our abstract result is the last theorem above, which is a generalization of some previous rigidity results for CMC embeddings. These previous results mostly originate from an idea of N. Kapouleas \cite{Kap1,Kap2},
which was then also employed by R. Mazzeo, F. Pacard and D. Pollack \cite{mpp}, R. Mazzeo and F. Pacard \cite{mp}, B. White \cite[\S 3]{Whi}, and, finally, J. P\'erez and A. Ros \cite[Thm~6.7]{PerRos96}. Using a similar idea, we prove a specific formulation of the $G$-equivariant implicit function theorem for CMC embeddings (see Proposition~\ref{thm:implfunctthmCMC}), without using Theorem A. The proof is purely geometric, based on a flux argument.
This curious proof cannot be extended, e.g., to the case where $x(M)$ is not the boundary of an open subset of $\overline M$.
In particular, this excludes the case of CMC embeddings of manifolds \emph{with boundary}.

Our abstract $G$-equivariant implicit function theorem applied to this setup covers a much broader situation, culminating in Theorem D, which generalizes Proposition~\ref{thm:implfunctthmCMC}. This is a practical illustration of the advantages of the generalized equivariant implicit function theorem given by Theorem A. Namely, the hypothesis that $x(M)$ is a boundary is replaced with the more general hypothesis that there exists a generalized volume functional in a $\mathcal C^1$-neighborhood of $x$ which is invariant under (small) isometries of the ambient space. Topological and geometrical conditions that guarantee that this hypothesis is satisfied are discussed in
Appendix~\ref{sec:invariantvolumes}. For instance, invariant volume functionals exist when
the ambient space is diffeomorphic to a sphere or to $\mathds R^n$; when the ambient space is not compact
but it has compact isometry group (in the case of embeddings of manifolds with boundary, compactness of
the isometry group is not necessary); or when the image of $x$ is contained in an open subset
of $\overline M$ which has vanishing de Rham cohomology in dimension $m=\mathrm{dim}(M)$.

Back to the abstract result, a final technical remark on its proof is in order.
The central point is the construction of a sort of \emph{slice}\footnote{%
The terminology here is not standard. Recall that a ``slice'' for an action
through a point is typically assumed invariant under the action of the isotropy of that point, see
\cite{Bredon}. This property is not required here.}
for the group action at a given smooth critical orbit of the variational problem.
More precisely, this is a smooth submanifold $S$, transversal to
the given smooth critical orbit, such that every nearby orbit (not necessarily smooth)
intercepts $S$, and with the property that it is a \emph{natural constraint}, i.e., restriction to $S$ of the variational
problem has the same critical points of the non-restricted functional.
Given the lack of regularity, transversality at an orbit does not imply
non-empty intersection with nearby orbits. The transversality argument is replaced
by a topological degree argument, that uses the finite-dimensionality of the group
orbits, see Proposition~\ref{thm:nonemptyintersection}.
\end{section}

\begin{section}{An implicit function theorem for CMC hypersurfaces}
\label{sec:implicitCMC}

In this section, we discuss an \emph{ad hoc} version of the implicit function theorem
in the context of constant mean curvature (CMC) embeddings in Riemannian manifolds, which
serves as motivation for the abstract formulation given in Section~\ref{sec:abstractGinvariantimplicit}.
 The basic setup is given by a CMC hypersurface $M$ of a Riemannian manifold $\overline M$.
Let us first recall two elementary applications of Stokes' theorem to the computation of
integrals involving Killing fields and mean curvature of submanifolds (see \cite[Lemma~5.5]{DanMir08}
for the $2$-dimensional orientable case).
\begin{lem}\label{thm:prelimzerointegral}
Let $(\overline M,\overline{\mathbf g})$ be a Riemannian manifold, let $M\subset\overline M$ be
a compact submanifold (without boundary), with mean curvature vector field $\vec H$,
and let $K\in\mathfrak X(M)$ be a Killing field in $M$.
Then:
\begin{equation}\label{eq::prelimzerointegral}
\int_M\overline{\mathbf g}(K,\vec H)\,\mathrm dM=0.
\end{equation}
In addition, if $M$ is the boundary of a (bounded) open subset of $\overline M$, then:
\begin{equation}\label{eq::prelimzerointegralbis}
\int_M\overline{\mathbf g}(K,\vec n)\,\mathrm dM=0,
\end{equation}
where $\vec n$ is a continuous unit normal field along $M$.
\end{lem}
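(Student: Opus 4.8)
The plan is to derive both identities as applications of the divergence theorem, the only extra ingredient being the Killing equation $\overline{\mathbf g}(\overline\nabla_X K,Y)+\overline{\mathbf g}(\overline\nabla_Y K,X)=0$, whose diagonal gives $\overline{\mathbf g}(\overline\nabla_X K,X)=0$ and whose trace says that $K$ is divergence-free in $\overline M$. For \eqref{eq::prelimzerointegral} I would work \emph{intrinsically} on $M$, applying the divergence theorem to the tangential projection $K^\top$ of $K$; for \eqref{eq::prelimzerointegralbis} I would instead apply the divergence theorem in $\overline M$ to $K$ itself over the region bounded by $M$. Throughout, $\overline\nabla$ denotes the Levi-Civita connection of $\overline M$, $\nabla$ that of $M$, and $(\cdot)^\top,(\cdot)^\perp$ the tangential and normal projections along $M$.

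The heart of the first part is the pointwise identity $\operatorname{div}_M(K^\top)=\overline{\mathbf g}(K,\vec H)$. To establish it, fix $p\in M$ and a local orthonormal frame $e_1,\dots,e_m$ of $TM$. Writing $K=K^\top+K^\perp$ and using that $\nabla$ is the tangential part of $\overline\nabla$, one finds
\[
\operatorname{div}_M(K^\top)=\sum_i\overline{\mathbf g}(\overline\nabla_{e_i}K,e_i)-\sum_i\overline{\mathbf g}(\overline\nabla_{e_i}K^\perp,e_i).
\]
The first sum vanishes by the diagonal Killing relation. For the second, differentiating the orthogonality $\overline{\mathbf g}(K^\perp,e_i)=0$ gives $\overline{\mathbf g}(\overline\nabla_{e_i}K^\perp,e_i)=-\overline{\mathbf g}(K^\perp,\overline\nabla_{e_i}e_i)$, and summing produces $\overline{\mathbf g}(K^\perp,\vec H)=\overline{\mathbf g}(K,\vec H)$, since $\vec H=\sum_i(\overline\nabla_{e_i}e_i)^\perp$ is normal. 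Integrating this identity over the closed manifold $M$ and invoking the divergence theorem (with no boundary term) yields \eqref{eq::prelimzerointegral}.

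For \eqref{eq::prelimzerointegralbis}, let $\Omega\subset\overline M$ be the bounded open set with $M=\partial\Omega$ and let $\vec n$ be the outward unit normal. The divergence theorem in $\overline M$ gives $\int_M\overline{\mathbf g}(K,\vec n)\,\mathrm dM=\int_\Omega\operatorname{div}_{\overline M}(K)\,\mathrm d\overline M$, and the trace of the Killing equation shows $\operatorname{div}_{\overline M}(K)=0$, so the right-hand side vanishes. The only delicacy is bookkeeping: the sign and normalization conventions for $\vec H$ and for the tangential–normal splitting must be kept consistent, but since the end results are integrals that vanish, the precise normalization of $\vec H$ is immaterial. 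I expect the (very mild) obstacle to lie in the pointwise identity of the second paragraph; the boundary identity is essentially immediate once one recalls that Killing fields are divergence-free. Both statements also admit a variational reading — the flow of $K$ acts by isometries, hence preserves the $m$-volume of $M$ and the enclosed volume of $\Omega$, giving \eqref{eq::prelimzerointegral} and \eqref{eq::prelimzerointegralbis} through the first variation of area and of enclosed volume respectively — but the divergence-theorem computation above is the more self-contained route.
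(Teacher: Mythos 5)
Your proposal is correct and follows essentially the same route as the paper: both parts rest on the pointwise identity $\operatorname{div}_M(K^\top)=\overline{\mathbf g}(K,\vec H)$ (the paper derives it via the Gauss formula applied to $\overline{\mathbf g}(K_M,Y)=\overline{\mathbf g}(K,Y)$, you via differentiating $\overline{\mathbf g}(K^\perp,e_i)=0$, which is the same computation in slightly different bookkeeping) followed by Stokes' theorem on the closed manifold $M$, and the second identity is in both cases the divergence theorem in $\overline M$ together with $\operatorname{div}_{\overline M}K=0$ for Killing fields.
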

\begin{proof}
Denote by $K_M\in\mathfrak X(M)$ the vector field on $M$ obtained by orthogonal projection
of $K$. We claim that $\mathrm{div}_M(K_M)=\overline{\mathbf g}(K,\vec H)$. Equality \eqref{eq::prelimzerointegral}
will then follow immediately from Stokes' Theorem.
In order to compute $\mathrm{div}_M(K_M)$, let $\overline\nabla$ denote the Levi-Civita connection of $\overline{\mathbf g}$
and let $\nabla$ be the Levi--Civita connection of the induced metric on $M$. If $\mathcal S$ is the
second fundamental form of $M$, then for all pairs $X,Y\in\mathfrak X(M)$, one has
$\overline\nabla_XY=\nabla_XY+\mathcal S(X,Y)$. Moreover, differentiating in the direction $X$ the equality $\overline{\mathbf g}(K_M,Y)=\overline{\mathbf g}(K,Y),$ we get:
\begin{equation}\label{eq:lemA}
\overline{\mathbf g}\big(\nabla_XK_M,Y\big)+\overline{\mathbf g}\big(K_M,\nabla_XY\big)
=\overline{\mathbf g}\big(\overline\nabla_XK,Y\big)+\overline{\mathbf g}\big(K,\overline\nabla_XY\big).
\end{equation}
Substituting $\overline{\mathbf g}(K,\overline\nabla_XY)=\overline{\mathbf g}(K,\nabla_XY)+\overline{\mathbf g}\big(K,\mathcal S(X,Y)\big)$ in \eqref{eq:lemA},
\begin{equation}\label{eq:lemB}
\overline{\mathbf g}\big(\nabla_XK_M,Y\big)=
\overline{\mathbf g}\big(\overline\nabla_XK,Y\big)+\overline{\mathbf g}\big(K,\mathcal S(X,Y)\big).
\end{equation}
Given $x\in M$, an orthonormal frame $e_1,\ldots,e_m$ of $T_xM$, and recalling that, since
$K$ is Killing, $\overline{\mathbf g}(\overline\nabla_{e_i}K,e_i)=0$
for all $i$, we get:
\[\mathrm{div}_M(K_M)=\sum_i\overline{\mathbf g}\big(\nabla_{e_i}K_M,e_i\big)=
\sum_i\overline{\mathbf g}\big(K,\mathcal S(e_i,e_i)\big)=\overline{\mathbf g}(K,\vec H),\]
which proves \eqref{eq::prelimzerointegral}. Formula \eqref{eq::prelimzerointegralbis} is an immediate application of Stokes' theorem, observing that $\mathrm{div}_{\overline M}K=0$, as $K$ is Killing.
\end{proof}

\begin{rem}\label{thm:topologicalcondition}
It is easy to find counterexamples to \eqref{eq::prelimzerointegralbis} when
$M$ is a hypersurface which is not the boundary of an open subset of $\overline M$.
If $M$ is the boundary of an open subset of $\overline M$, i.e., if the set
$\overline M\setminus M$ has two connected components, then $M$ is \emph{transversely oriented}.
This means that the normal bundle $TM^\perp$ is orientable. Conversely, if $M$ is transversely oriented,
then the condition that $M$ be the boundary of an open subset of $\overline M$  is equivalent to the condition
that the homomorphism $H_1(\overline M)\to H_1(\overline M,\overline M\setminus M)$ induced
in singular homology by the inclusion $(\overline M,\emptyset)\hookrightarrow(\overline M,\overline M\setminus M)$
be  trivial.
\end{rem}

\begin{defin}
Given a transversely oriented codimension one CMC embedding $x\colon M\hookrightarrow\overline M$,
the \emph{Jacobi operator} $J_x$ of $x$ is the second order linear elliptic differential
operator
\begin{equation}\label{eq:JacobiCMC}
J_x(f)=\Delta_xf-(m\,\mathrm{Ric}_{\overline M}(\vec{n}_x)+\|\mathcal S_x\|^2)f,
\end{equation}
defined on the space of $\mathcal C^2$-functions $f\colon M\to\mathds R$.
In the above formula, $m=\mathrm{dim}(M)$, $\Delta_x$ is the (positive) Laplacian of functions on $M$ relative to the pull-back metric $x^*(\overline{\mathbf g})$, $\mathrm{Ric}_{\overline M}(\vec{n}_x)$ is the Ricci curvature of $\overline M$ evaluated on
the unit normal field $\vec{n}_x$ of $x$ and $\mathcal S_x$ is the second fundamental form of $x$.
\end{defin}

\begin{defin}
A function $f$ satisfying $J_x(f)=0$ is called a \emph{Jacobi field} of $x$.
\end{defin}

\begin{rem}
It follows easily from \eqref{eq:JacobiCMC} that the space $\ker(J_x)$ of Jacobi fields of $x$ is a finite-dimensional space.
\end{rem}

\begin{rem}
Given any $\alpha\in\left]0,1\right[$, seen as a linear operator from $\mathcal C^{2,\alpha}(M)$
to $\mathcal C^{0,\alpha}(M)$, $J_x$ is a Fredholm map\footnote{
Second order self-adjoint elliptic operators acting on sections of Euclidean vector bundles over compact manifolds
are Fredholm maps of index zero from the space of $\mathcal C^{j,\alpha}$-sections to the space of $\mathcal C^{j-2,\alpha}$-sections,
$j\ge2$, see for instance \cite[\S 1.4]{Whi} and \cite[Theorem~1.1]{Whi2}. This fact will
be used throughout the paper.}
of index zero, which is symmetric
with respect to the $L^2$-pairing $\langle\cdot,\cdot\rangle_{L^2}\colon
\mathcal C^{2,\alpha}(M)\times\mathcal C^{0,\alpha}(M)\to\mathds R$, given by
$\langle f_1,f_2\rangle_{L^2}=\int_Mf_1\cdot f_2\,\mathrm dM.$
In particular, $\ker(J_x)=\im(J_x)^\perp$, relatively to the $L^2$-inner product.
\end{rem}

Note that if $\overline{K}$ is a Killing field of $(\overline M,\overline{\mathbf g})$, then
$f=\overline{\mathbf g}(\overline{K},\vec n_x)$ is a Jacobi field of $x$. The embedding $x$ is equivariantly nondegenerate if every Jacobi field arises in this way:

\begin{defin}
The CMC embedding $x$ is \emph{nondegenerate} if given any Jacobi field $f$ of $x$, there exists a Killing field
$\overline{K}$ of $(\overline M,\overline{\mathbf g})$ such that $f=\overline{\mathbf g}(\overline{K},\vec n_x)$.
\end{defin}

\begin{rem}
Nondegeneracy of every CMC embeddings of $M$ into $\overline M$ is a \emph{generic} property in the
set of Riemannian metrics $\overline{\mathbf g}$, see \cite{Whi, Whi2}.
\end{rem}

\begin{defin}
Two embeddings $x_i\colon M\hookrightarrow\overline M$, $i=1,2$, are said to be
\emph{congruent} if there exists a diffeomorphism $\phi\colon M\to M$ such that $x_2=x_1\circ\phi$, and
\emph{isometrically congruent} if there exists a diffeomorphism $\phi\colon M\to M$ and
an isometry $\psi\colon \overline M\to\overline M$ such that $x_2=\psi\circ x_1\circ\phi$.
Roughly speaking, congruence classes of embeddings of $M$ into $\overline M$ are
submanifolds of $\overline M$ that are diffeomorphic to $M$.
\end{defin}

We now prove the above mention formulation of the implicit function theorem for CMC embeddings (cf. Theorem D):

\begin{prop}\label{thm:implfunctthmCMC}
Let $x\colon M\hookrightarrow\overline M$ be a nondegenerate codimension one CMC embedding of a compact manifold
$M$ into a Riemannian manifold $(\overline M,\overline{\mathbf g})$, with mean curvature $H_0$. Assume also that
$x(M)$ is the boundary of an open subset of $\overline M$.
Then, there exists an open interval $\left]H_0-\varepsilon,H_0+\varepsilon\right[$ and a smooth
function $\left]H_0-\varepsilon,H_0+\varepsilon\right[\ni H\mapsto\varphi_H\in\mathcal C^{2,\alpha}(M)$, with
$\varphi_{H_0}=0$, such that:

\begin{itemize}
\item[(a)] for all $H\in\left]H_0-\varepsilon,H_0+\varepsilon\right[\,$, the map $x_H\colon M\hookrightarrow\overline M$
defined by \[x_H(p)=\exp_{x(p)}\big(\varphi_H(p)\cdot\vec n_x(p)\big),\qquad p\in M,\] is a CMC embedding having
mean curvature equal to $H$;
\smallskip

\item[(b)] any given CMC embedding $y\colon M\hookrightarrow\overline M$ sufficiently close to $x$
(in the $\mathcal C^{2,\alpha}$-topology) is isometrically congruent to some $x_H$.
\end{itemize}
\end{prop}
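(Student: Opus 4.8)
The plan is to realize the family $\varphi_H$ through a Lyapunov--Schmidt reduction of the mean curvature equation, and to use the two integral identities of Lemma~\ref{thm:prelimzerointegral} to show that the resulting finite-dimensional bifurcation equation vanishes automatically. Write $\mathcal H(x_\varphi)$ for the mean curvature of the normal graph $x_\varphi(p)=\exp_{x(p)}(\varphi(p)\,\vec n_x(p))$; this is a smooth map $\varphi\mapsto\mathcal H(x_\varphi)$ from a neighborhood of $0$ in $\mathcal C^{2,\alpha}(M)$ into $\mathcal C^{0,\alpha}(M)$, whose differential at $\varphi=0$ is (up to sign) the Jacobi operator $J_x$. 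Since $J_x$ is self-adjoint and Fredholm of index zero, one has the $L^2$-orthogonal splittings $\mathcal C^{0,\alpha}(M)=\ker(J_x)\oplus\im(J_x)$ and $\mathcal C^{2,\alpha}(M)=\ker(J_x)\oplus W$, with $W=\im(J_x)\cap\mathcal C^{2,\alpha}(M)$, and $J_x$ restricts to an isomorphism $W\to\im(J_x)$. By nondegeneracy of $x$, I may fix Killing fields $\overline K_1,\dots,\overline K_d$ of $(\overline M,\overline{\mathbf g})$ such that the functions $\nu_j=\overline{\mathbf g}(\overline K_j,\vec n_x)$, $j=1,\dots,d$, form a basis of $\ker(J_x)$.

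For part (a), letting $P$ denote the $L^2$-orthogonal projection onto $\im(J_x)$, I would first solve the auxiliary equation $P[\mathcal H(x_\varphi)-H]=0$ for $\varphi\in W$. Its partial differential in $\varphi$ at $(0,H_0)$ is the isomorphism $J_x|_W$, so the classical implicit function theorem yields a smooth family $H\mapsto\varphi(H)\in W$, with $\varphi(H_0)=0$ and $P[\mathcal H(x_{\varphi(H)})-H]=0$; equivalently $g(H):=\mathcal H(x_{\varphi(H)})-H$ lies in $\ker(J_x)$. The crux is to prove $g(H)=0$. For $\varphi$ small, $x_\varphi(M)$ is a $\mathcal C^1$-small perturbation of $\partial\Omega$, hence still bounds a nearby open set, so both conclusions of Lemma~\ref{thm:prelimzerointegral} apply to $x_\varphi$. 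Writing the mean curvature vector as $\vec H_\varphi=\mathcal H(x_\varphi)\,\vec n_\varphi$, identity \eqref{eq::prelimzerointegral} gives $\int_M\mathcal H(x_\varphi)\,\overline{\mathbf g}(\overline K_j,\vec n_\varphi)\,\mathrm dM_\varphi=0$, while \eqref{eq::prelimzerointegralbis} gives $\int_M\overline{\mathbf g}(\overline K_j,\vec n_\varphi)\,\mathrm dM_\varphi=0$; subtracting, for every constant $H$,
\[\int_M\big(\mathcal H(x_\varphi)-H\big)\,\overline{\mathbf g}(\overline K_j,\vec n_\varphi)\,\mathrm dM_\varphi=0,\qquad j=1,\dots,d.\]
Evaluating at $\varphi=\varphi(H)$ and writing $\mathrm dM_{\varphi(H)}=\theta(H)\,\mathrm dM$ with $\theta(H_0)=1$, this says $g(H)$ is $L^2$-orthogonal to $\mu_j(H):=\overline{\mathbf g}(\overline K_j,\vec n_{\varphi(H)})\,\theta(H)$, which depend continuously on $H$ and satisfy $\mu_j(H_0)=\nu_j$. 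Since the $\nu_j$ form a basis of $\ker(J_x)$, the $\ker(J_x)$-components of the $\mu_j(H)$ remain a basis for $H$ near $H_0$; as $g(H)\in\ker(J_x)$ is orthogonal to all of them, I conclude $g(H)=0$. Thus $\mathcal H(x_{\varphi(H)})\equiv H$, and setting $\varphi_H:=\varphi(H)$ proves (a).

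For part (b) I would use a slice argument. Consider the map $(\psi,w)\mapsto$ (the normal-graph representative of $\psi\circ x_w$) from a neighborhood of $(e,0)$ in $G\times W$ to $\mathcal C^{2,\alpha}(M)$; its differential at $(e,0)$ sends $(\overline K,w)$ to $\overline{\mathbf g}(\overline K,\vec n_x)+w$, with image $\ker(J_x)+W=\mathcal C^{2,\alpha}(M)$ by nondegeneracy. Hence this map is a local submersion onto a neighborhood of $0$, so a CMC embedding $y$ close to $x$, after a reparameterization (a congruence) and the action of an isometry $\psi$ near $e$, is represented by a normal graph $x_w$ with $w\in W$ small. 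Since $x_w$ is then CMC with some constant mean curvature $H_y$ close to $H_0$, one has $P[\mathcal H(x_w)-H_y]=0$ with $w\in W$; by the uniqueness clause of the implicit function theorem used in part (a), $w=\varphi(H_y)=\varphi_{H_y}$. Therefore $y$ is isometrically congruent to $x_{H_y}$, proving (b).

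A remark on difficulty: the only genuinely delicate point is the vanishing of the bifurcation equation, for which neither transversality nor the group action alone suffices, since the isometry group fixes the value $H_0$ and cannot by itself produce the $H$-direction. The mechanism is exactly the pair of Stokes-type identities of Lemma~\ref{thm:prelimzerointegral}, valid because $x_\varphi(M)$ continues to bound, together with nondegeneracy: these force $\mathcal H(x_{\varphi(H)})-H$ to be $L^2$-orthogonal to a space that degenerates to $\ker(J_x)$ at $H_0$. This is precisely where the hypothesis that $x(M)$ be a boundary enters, and it is indispensable in view of the counterexample mentioned in the introduction.
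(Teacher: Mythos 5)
Your part (a) is correct and is essentially the paper's own argument in different packaging: where you perform a Lyapunov--Schmidt reduction (projecting onto $\im(J_x)$ and then killing the finite-dimensional bifurcation equation), the paper augments $\mathcal H$ with $d$ auxiliary unknowns, $\widetilde{\mathcal H}(f,a)=\mathcal H(f)+\sum_i a_if_i$, proves this is a submersion, and then shows $a=0$ on solutions; in both cases the decisive mechanism is the pair of identities \eqref{eq::prelimzerointegral} and \eqref{eq::prelimzerointegralbis} of Lemma~\ref{thm:prelimzerointegral}. If anything, your rendition is more careful on one point: you apply the identities to the perturbed embedding $x_\varphi$ (with its own normal $\vec n_\varphi$ and area element $\mathrm dM_\varphi$) and recover the vanishing by a continuity argument on the kernel components $\mu_j(H)$, whereas the paper writes the integrals directly against the fixed functions $f_i$ and the fixed measure $\mathrm dM$.

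The genuine gap is in part (b). Your slice argument applies the submersion theorem to the map $(\psi,w)\mapsto$ (normal-graph representative of $\psi\circ x_w$), which requires this map to be of class $\mathcal C^1$ near $(e,0)$. But this map is precisely the local action of $\mathrm{Iso}(\overline M,\overline{\mathbf g})$ in normal-graph coordinates, and it is \emph{not} differentiable on $\mathcal C^{2,\alpha}$: expressing $\psi\circ x_w$ as a normal graph over $x$ requires inverting a diffeomorphism of class $\mathcal C^{2,\alpha}$, an operation which is only continuous (see \cite{AliPic10}); the action is differentiable only at embeddings of higher regularity. The differential you compute at the single point $(e,0)$ does exist (because $x$ is smooth), but pointwise differentiability is not enough to invoke local surjectivity of submersions --- one needs $\mathcal C^1$ regularity on a neighborhood, and that fails at every nearby $(\psi,w)$ with $w$ merely $\mathcal C^{2,\alpha}$. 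This is exactly the low-regularity obstruction around which the whole paper is built. The paper's own proof of item (b) avoids it by a dimension count: the orbit of the CMC embedding $x_{\varphi_H}$ (which is smooth by elliptic regularity) is a $d$-dimensional submanifold of $\mathcal C^{2,\alpha}(M)$ --- a nontrivial fact from \cite{AliPic10} --- contained in the $d$-dimensional manifold $\mathcal H^{-1}(H)$, hence relatively open in it; the abstract theorem instead replaces the transversality/submersion step by a topological degree argument (Proposition~\ref{thm:nonemptyintersection}), which needs only continuity of the action plus differentiability of $g\mapsto g\cdot x$ at the smooth point $x$. Your part (b) becomes correct if the submersion step is replaced by either of these arguments.
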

\begin{proof}
By a standard argument in submanifold theory, congruence classes of embeddings
$y\colon M\hookrightarrow\overline M$ near $x$ are parameterized by functions on $M$. More precisely,
to each function $\varphi\in\mathcal C^{2,\alpha}(M)$ one associates the map $x_\varphi\colon M\to\overline M$
defined by $x_\varphi(p)=\exp_{x(p)}\big(\varphi(p)\cdot\vec n_x(p)\big)$, $p\in M$. For $\varphi$ in a neighborhood
of $0$, $x_\varphi$ is an embedding of $M$ into $\overline M$. Conversely, given any embedding $y\colon M\hookrightarrow\overline M$
which is sufficiently close to $x$, there exists $\varphi\in\mathcal C^{2,\alpha}(M)$ near $0$ such that
$y$ is congruent to $x_\varphi$. Given a sufficiently small neighborhood $\mathcal U$ of $0$ in $\mathcal C^{2,\alpha}(M)$,
consider the map $\mathcal H\colon \mathcal U\to\mathcal C^{0,\alpha}(M)$ that associates
to each $\varphi$ the mean curvature function of the embedding $x_\varphi$. This function is smooth, as it is given
by a second order quasi-linear differential operator having smooth coefficients.
The derivative $\mathrm d\mathcal H(0)\colon \mathcal C^{2,\alpha}(M)\to\mathcal C^{0,\alpha}(M)$ coincides
with the Jacobi operator $J_x$.

By the nondegeneracy assumption on $x$, there exist $d=\dim\ker(J_x)\geq0$ Killing vector fields $\overline{K}_1,\ldots,\overline{K}_d$ of $(\overline M,\overline{\mathbf g})$ such that
the functions $f_i=\overline{\mathbf g}(\overline{K}_i,\vec n_x)$, $i=1,\ldots,d$, form a basis of $\ker(J_x)$.
Consider now the auxiliary map $\widetilde{\mathcal H}\colon \mathcal U\times\mathds R^d\to\mathcal C^{0,\alpha}(M)$
defined by:
\[\widetilde{\mathcal H}(f,a_1,\ldots,a_d)=\mathcal H(f)+\sum_{i=1}^da_if_i.\]
Clearly $\widetilde{\mathcal H}$ is smooth, and:
\[\mathrm d\widetilde{\mathcal H}(0)(f,b_1,\ldots,b_d)=J_x(f)+\sum_{i=1}^db_if_i.\]
Now, $\mathrm d\widetilde{\mathcal H}(0)$ is surjective; namely, the $f_i$'s span
the orthogonal complement of $\mathrm{Im}(J_x)$.
Moreover, the kernel of  $\mathrm d\widetilde{\mathcal H}(0)$ coincides with $\ker(J_x)\oplus\{0\}$,
which is finite-dimensional and therefore complemented in $\mathcal C^{2,\alpha}(M)\oplus\mathds R^d$.
In other words, $\widetilde{\mathcal H}$ is a smooth submersion at $0$.

Using the local form of submersions, we get that for $H$ near $H_0$, there
exists an open neighborhood $\mathcal V$ of $0$ in $\mathcal C^{2,\alpha}(M)\times\mathds R^d$ such that
the set $\widetilde{\mathcal H}^{-1}(H)\cap\mathcal V$ is a smooth embedded submanifold of dimension $d$.
Moreover, using the fact that submersions admit smooth local sections,
one has that there exists a smooth function $\left]H_0-\varepsilon,H_0+\varepsilon\right[\ni H\mapsto
\widetilde\varphi_H\in\mathcal V$ such that $\widetilde{\mathcal H}(\widetilde\varphi_H)=H$ for all $H$,
and with $\widetilde\varphi_{H_0}=0$.
Now, we claim that for all $H\in\mathds R$,
given $\widetilde\varphi=(\varphi,a_1,\ldots,a_d)\in\widetilde{\mathcal H}^{-1}(H)$,
then $a_1=\ldots=a_d=0$, and $\mathcal H(\varphi)=H$; in other words, $\widetilde{\mathcal H}^{-1}(H)=\mathcal H^{-1}(H)\times\{0\}$.
In order to prove the claim, assume
\[\mathcal H(\varphi)+\sum_{i=1}^da_if_i=H.\]
Multiplying both sides of this equality by $\sum_ia_if_i$ and integrating on $M$, keeping in mind
that:
\begin{equation*}
\int_M\mathcal H(\varphi)\sum_{i=1}^da_if_i\,\mathrm dM\stackrel{\eqref{eq::prelimzerointegral}}=0 \quad\mbox{ and }\quad H\cdot\int_M\sum_{i=1}^da_if_i\,\mathrm dM\stackrel{\eqref{eq::prelimzerointegralbis}}=0,
\end{equation*}
we get: $$\int_M\left[\sum_{i=1}^da_if_i\right]^2\,\mathrm dM=0.$$
This implies $a_1=\ldots=a_d=0$ and proves the claim.
Hence, we have $\widetilde\varphi_H=(\varphi_H,0,\ldots,0)$, with $H\mapsto\varphi_H$ satisfying item (a) of the proposition.

Item (b) also follows easily. Namely, the action by isometries of
$(\overline M,\overline{\mathbf g})$ on each CMC embedding $x_{\varphi_H}$ produces an orbit which is a
$d$-dimensional submanifold of the Banach space $\mathcal C^{2,\alpha}(M)$.\footnote{%
This is not a trivial fact, keeping into account that the left action of the
isometry group of $(\overline M,\mathbf g)$ on the space $\mathcal C^{2,\alpha}(M)$
obtained via exponential map of the normal bundle of $x$ is only continuous, and
not differentiable. However, it is proved in \cite{AliPic10} that the orbit
of any smooth embedding is a smooth submanifold.}
Such orbit is contained in $\mathcal H^{-1}(H)$, which is also a
$d$-dimensional submanifold around $x_H$. Hence,
a neighborhood of $x_H$ in the orbit of $x_H$
coincides with a neighborhood of $x_H$ in $\mathcal H^{-1}(H)$.
This implies that CMC embeddings $\mathcal C^{2,\alpha}$-close to $x$ must be
isometrically congruent to some $x_H$.
\end{proof}

\begin{rem}\label{thm:remnoboundary}
Observe that the assumption that $x(M)$ be the boundary of a bounded open subset of $\overline M$ cannot
be omitted in Proposition~\ref{thm:implfunctthmCMC}, as equality \eqref{eq::prelimzerointegralbis} is used
in the proof (see Remark~\ref{thm:topologicalcondition}). In particular, Proposition~\ref{thm:implfunctthmCMC}
does not cover the case of CMC embeddings of manifolds with boundary (cf. Theorem D).
\end{rem}
\end{section}

\begin{section}{\texorpdfstring{Statement of the $G$-equivariant implicit function theorem}{Statement of the G-equivariant implicit function theorem}}
\label{sec:abstractGinvariantimplicit}
The usual formulations of the implicit function theorem give a local result, so that
its statement can be given using open subsets of Banach spaces as domains and codomains
of the functions involved. For the equivariant version of the theorem that will be discussed
in this section the situation is somewhat different. Namely,  we will consider
group actions on Banach manifolds whose orbits are not necessarily contained in the
domain of some local chart, or in the domain of a local trivialization of a vector bundle.
In fact, we will not even assume boundedness of the orbits.
This suggests that, in spite of the local character of the result and its proof, the equivariant
formulation of our theorem is better cast in an abstract Banach manifolds/Banach vector bundles setup.

The basic setup is given by a manifold
$\mathfrak M$ acted upon by a Lie group $G$, another manifold $\Lambda$,
and a differentiable function $\mathfrak f\colon \mathfrak M\times\Lambda\to\mathds R$
which is $G$-invariant in the first variable.
More precisely, our framework is described by the following set of axioms:

\begin{axiom}[A1]
$\mathfrak M$ and $\Lambda$ are differentiable manifolds,
modeled on a (possibly infinite-dimensional) Banach space;
\end{axiom}

\begin{axiom}[A2]
 $G$ is a finite-dimensional Lie group, acting continuously on $\mathfrak M$ (on the left)
by homeomorphisms, and $\mathfrak g$ denotes its Lie algebra;
\end{axiom}

\begin{axiom}[A3]
$\mathfrak f\colon \mathfrak M\times\Lambda\to\mathds R$ is a function of class
$\mathcal C^{k+1}$, $k\ge1$, satisfying $\mathfrak f(g\cdot x,\lambda)=\mathfrak f(x,\lambda)$ for all
$g\in G$, $x\in\mathfrak M$ and $\lambda\in\Lambda$.
\end{axiom}

For all $x\in\mathfrak M$, denote by
\begin{equation}\label{eq:defbetax}
\beta_x\colon G\longrightarrow\mathfrak M, \quad \mbox{ and }\quad \gamma_g\colon \mathfrak M\longrightarrow\mathfrak M,
\end{equation}
the map $\beta_x(g)=g\cdot x$ and the homeomorphism $\gamma_g(x)=g\cdot x$, respectively.

As to the regularity of the group action, we make the following assumptions:

\begin{axiom}[B]
There exists a dense subset $\mathfrak M'\subset\mathfrak M$
such that for all $x\in\mathfrak M'$ the map $\beta_x\colon G\to\mathfrak M$ is differentiable
at $1\in G$.
\end{axiom}

Let us denote by $\partial_1\mathfrak f\colon \mathfrak M\times\Lambda\to
T\mathfrak M^*$ the derivative of $\mathfrak f$ with respect to the first variable; our aim is to study
the equation $\partial_1\mathfrak f(x,\lambda)=0$.
Observe that with the weak regularity assumptions on the group action
(we do not assume in principle the differentiability of the map $\gamma_g$), it does not follow
that if $\partial_1\mathfrak f(x,\lambda)=0$, then also $\partial_1\mathfrak f(g\cdot x,\lambda)=0$ for all $g\in G$.
We will therefore explicitly assume:

\begin{axiom}[C]
For all $\lambda\in\Lambda$, the set $\big\{x\in\mathfrak M:\partial_1\mathfrak f(x,\lambda)=0\big\}$
is $G$-invariant.
\end{axiom}

Let us now look at the question of lack of a \emph{gradient} for the function
$\mathfrak f$; we define a \emph{gradient-like} map by introducing a suitable vector bundle
on the manifold $\mathfrak M$, defined by the following axioms:

\begin{axiom}[D1]
$\mathcal E\to\mathfrak M$ is a $\mathcal C^k$-Banach vector bundle;
\end{axiom}

\begin{axiom}[D2]
There exist $\mathcal C^k$-vector bundle morphisms:
\[\mathfrak i\colon T\mathfrak M\longrightarrow\mathcal E\quad\text{and}\quad\mathfrak j\colon \mathcal E\longrightarrow T\mathfrak M^*,\]
with $\mathfrak j$ injective;
\end{axiom}

\begin{axiom}[D3]
For all $x\in\mathfrak M$, the bilinear form $\langle\cdot,\cdot\rangle_x\colon T_x\mathfrak M\times T_x\mathfrak M\to\mathds R$
defined by $\langle u,v\rangle_x=\mathfrak j_x\big(\mathfrak i_x(u)\big)v$ is a (not necessarily complete) positive definite
inner product (this implies that also $\mathfrak i$ is injective);
\end{axiom}

\begin{axiom}[D4]
There exists a $\mathcal C^k$-map $\delta\mathfrak f\colon \mathfrak M\times\Lambda\to\mathcal E$ such that
\[\mathfrak j\circ\delta\mathfrak f=\partial_1\mathfrak f.\]
\end{axiom}

Since $\mathfrak j$ is injective, we get that $\partial_1\mathfrak f(x,\lambda)=0$
if and only if $\delta\mathfrak f(x,\lambda)=0$.

Let us now go back to the question of $G$-invariance of the set of
critical points of the functions $\mathfrak f(\cdot,\lambda)$. Assuming that the $G$ action
is by diffeomorphisms (i.e., that the maps $\gamma_g$ are diffeomorphisms), given
$x_0$ such that $\partial_1\mathfrak f(x_0,\lambda)=0$ then obviously $\partial_1\mathfrak f(g\cdot x_0,\lambda)=0$
for all $g\in G$. For this conclusion it is necessary to differentiate $\gamma_g$;
when the action of $G$ is only by homeomorphisms, the $G$-invariance of the critical set
is obtained under a suitable assumption of $G$-equivariance for the map $\delta\mathfrak f$.
Given $x\in\mathfrak M$, the fiber of $\mathcal E$ over $x$ will be denoted by $\mathcal E_x$.

\begin{axiom}[E1]
There exists a continuous left $G$-action by linear isomorphisms on the fibers of
$\mathcal E$ compatible with the action on $\mathfrak M$, i.e.,
such that the projection $\mathcal E\to\mathfrak M$ is equivariant (this means that for each $g$, it is given a family
of linear isomorphisms $\varphi_{g,x}\colon \mathcal E_x\to\mathcal E_{g\cdot x}$ depending continuously on $x\in\mathfrak M$ and on
$g\in G$, such that $\varphi_{gh,x}=\varphi_{g,h\cdot x}\circ\varphi_{h,x}$ for all $g,h\in G$ and all $x\in\mathfrak M$);
\end{axiom}

\begin{axiom}[E2]
The map $\delta\mathfrak f(\cdot,\lambda)\colon \mathfrak M\to\mathcal E$ is equivariant for all $\lambda\in\Lambda$.
\end{axiom}

\begin{lem}\label{thm:criticalorbit}
Axioms {\rm (E1)} and {\rm (E2)} imply {\rm (C)}.
\end{lem}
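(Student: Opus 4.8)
The plan is to reduce the assertion to a statement about the zero set of the gradient-like map $\delta\mathfrak f$, where the \emph{linearity} of the fiberwise action supplied by (E1) does all the work. First I would invoke (D4) together with the injectivity of $\mathfrak j$ from (D2). As already observed in the text right after (D4), this gives, for every $\lambda\in\Lambda$,
\[
\big\{x\in\mathfrak M:\partial_1\mathfrak f(x,\lambda)=0\big\}=\big\{x\in\mathfrak M:\delta\mathfrak f(x,\lambda)=0\big\}.
\]
Hence it suffices to prove that the right-hand zero set is $G$-invariant, and from then on one may work entirely with $\delta\mathfrak f$ rather than with $\partial_1\mathfrak f$ directly.

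The key step is then to fix $\lambda\in\Lambda$, an element $g\in G$, and a point $x\in\mathfrak M$ with $\delta\mathfrak f(x,\lambda)=0$, and to apply the equivariance assumption (E2). By definition, equivariance of $\delta\mathfrak f(\cdot,\lambda):\mathfrak M\to\mathcal E$ with respect to the action on $\mathfrak M$ and the fiberwise action from (E1) means precisely that
\[
\delta\mathfrak f(g\cdot x,\lambda)=\varphi_{g,x}\big(\delta\mathfrak f(x,\lambda)\big).
\]
Since $\delta\mathfrak f(x,\lambda)=0$ and $\varphi_{g,x}\colon\mathcal E_x\to\mathcal E_{g\cdot x}$ is a linear isomorphism, it carries the zero vector of $\mathcal E_x$ to the zero vector of $\mathcal E_{g\cdot x}$, so $\delta\mathfrak f(g\cdot x,\lambda)=0$. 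Feeding this back through the equivalence above yields $\partial_1\mathfrak f(g\cdot x,\lambda)=0$; as $g$ and $x$ were arbitrary, the critical set is $G$-invariant, which is exactly (C).

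I do not expect a genuine obstacle here: the argument is essentially immediate once the correct objects are in place. The only substantive ingredient is the \emph{linearity} of the maps $\varphi_{g,x}$ guaranteed by (E1), which is what lets one conclude $\varphi_{g,x}(0)=0$. It is worth emphasizing \emph{why} this detour through $\delta\mathfrak f$ is necessary at all. In the classical setting where the maps $\gamma_g$ are diffeomorphisms, one would simply differentiate the invariance $\mathfrak f(g\cdot x,\lambda)=\mathfrak f(x,\lambda)$ of (A3) to transport criticality along orbits. Under the present weak regularity, $\gamma_g$ need not be differentiable, so that differentiation is unavailable; the equivariance packaged in (E1)--(E2) is the precise substitute that reinstates the conclusion without differentiating the action.
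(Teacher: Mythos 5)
Your proof is correct and follows essentially the same route as the paper's: pass from $\partial_1\mathfrak f=0$ to $\delta\mathfrak f=0$ via (D4) and injectivity of $\mathfrak j$, use the equivariance (E2) with the linear fiberwise action (E1) to transport the zero along the orbit, and translate back. The paper states this more tersely, while you spell out explicitly that equivariance means $\delta\mathfrak f(g\cdot x,\lambda)=\varphi_{g,x}\big(\delta\mathfrak f(x,\lambda)\big)$ and that linearity of $\varphi_{g,x}$ is what sends zero to zero — a worthwhile clarification, but not a different argument.
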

\begin{proof}
Assume $\partial_1\mathfrak f(x_0,\lambda)=0$, then $\delta\mathfrak f(x_0,\lambda)=0$.
The equivariance property gives $\delta\mathfrak f(g\cdot x_0,\lambda)=0$ for all $g\in G$, i.e.,
$\partial_1\mathfrak f(g\cdot x_0,\lambda)=0$ for all $g\in G$.
\end{proof}
Finally, another set of assumptions is needed in order to deal with the lack
of the map $x\mapsto\mathrm d\beta_x(1)\in\mathrm{Lin}(\mathfrak g,T\mathfrak M)$ for all $x\in\mathfrak M$.
Our next set of hypotheses will give the existence of a continuous extension to $\mathfrak M$ of
this map provided that its codomain be enlarged and endowed with a weaker topology.
As above, this set of assumptions is better cast in terms of vector bundles and injective morphisms:

\begin{axiom}[F]
There exists a $\mathcal C^k$-vector bundle $\mathcal Y\to\mathfrak M$ and $\mathcal C^k$-vector
bundle morphisms:
\[\widetilde{\jmath}\colon \mathcal E\longrightarrow\mathcal Y^*\quad\text{and}\quad\kappa\colon T\mathfrak M\longrightarrow\mathcal Y,\]
with $\kappa$ injective, such that:
\begin{itemize}
\item[(F1)] $\kappa^*\circ\widetilde{\jmath}=\mathfrak j$ (from which it follows that also $\widetilde{\jmath}$ is injective);
\item[(F2)] the map $\mathfrak M'\ni x\mapsto\kappa_x\circ\mathrm d\beta_x(1)\in\mathrm{Lin}(\mathfrak g,\mathcal Y_x)$
has a continuous extension to a section of the vector bundle $\mathrm{Lin}(\mathfrak g,\mathcal Y)\to\mathfrak M$.
\end{itemize}
\end{axiom}

From density of $\mathfrak M'$, the extension in (F2) is therefore unique.
We are now ready for the detailed technical statement of Theorem A in the Introduction and its proof:

\begin{teo}
\label{thm:mainimplicitfunctiontheorem}
In the above setup of Axioms (A1) to (F), let  $(x_0,\lambda_0)\in\mathfrak M'\times\Lambda$ be a point such that $\partial_1\mathfrak f(x_0,\lambda_0)=0$. Denote by $L\colon T_{x_0}\mathfrak M\to\mathcal E_{x_0}$ the linear map
\[L:=\pi_{\mathrm{ver}}\circ \partial_1(\delta\mathfrak f)(x_0,\lambda_0),\]
where $\pi_{\mathrm{ver}}\colon T_{0_{x_0}}\mathcal E\to\mathcal E_{x_0}$ is the canonical vertical projection. If:
\begin{itemize}
\item[(G1)] $L$ is Fredholm of index $0$;
\item[(G2)] $\ker L=\im\mathrm d\beta_{x_0}(1)$,
\end{itemize}
then, there exists a $G$-invariant neighborhood $V\subset\mathfrak M\times\Lambda$
of $(G\cdot x_0,\lambda_0)$ and a $\mathcal C^k$-function $\sigma\colon \Lambda_0\to\mathfrak M$ defined
in a neighborhood $\Lambda_0$ of $\lambda_0$ in $\Lambda$ such that
$(x,\lambda)\in V$ and $\partial_1\mathfrak f(x,\lambda)=0$ hold if and only if $x\in G\cdot\sigma(\lambda)$.
\end{teo}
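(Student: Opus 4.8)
The plan is to reduce the statement to a finite-codimension situation in which the ordinary implicit function theorem applies on a \emph{slice}, and to exploit the $G$-invariance of $\mathfrak f$ to kill the finitely many obstruction directions. First I would pass to a chart of $\mathfrak M$ centered at $x_0$ and a local trivialization $\mathcal E|_U\cong U\times\mathcal E_{x_0}$, so that $\delta\mathfrak f$ becomes a $\mathcal C^k$-map into the fixed Banach space $\mathcal E_{x_0}$ whose first partial derivative at $(x_0,\lambda_0)$ is $L$. Since $\delta\mathfrak f(x_0,\lambda_0)=0$ and $\mathfrak j\circ\delta\mathfrak f=\partial_1\mathfrak f$ by (D4), the composition $\mathfrak j_{x_0}\circ L$ is the Hessian $\partial_1\partial_1\mathfrak f(x_0,\lambda_0)$ and hence a symmetric bilinear form. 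Combining this symmetry with the positive-definiteness in (D3), I would show $\mathfrak i_{x_0}(\ker L)\cap\im L=\{0\}$: if $\mathfrak i_{x_0}(v)=L(u)$ with $v\in\ker L$, then $\langle v,v\rangle_{x_0}=\mathfrak j_{x_0}(L(u))\,v=\mathfrak j_{x_0}(L(v))\,u=0$, forcing $v=0$. By (G1)--(G2), $\ker L$ and $\mathrm{coker}\,L$ share the same finite dimension $d$, so this gives the splitting $\mathcal E_{x_0}=\im L\oplus\mathfrak i_{x_0}(\ker L)$, the abstract replacement for the identity $\ker(J_x)=\im(J_x)^\perp$ used in Proposition~\ref{thm:implfunctthmCMC}.

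The second ingredient is an orthogonality relation from invariance. For $x\in\mathfrak M'$ and $\xi\in\mathfrak g$, differentiating $\mathfrak f(\exp(t\xi)\cdot x,\lambda)=\mathfrak f(x,\lambda)$ at $t=0$ gives $\partial_1\mathfrak f(x,\lambda)\,\mathrm d\beta_x(1)\xi=0$; through (D4) and (F1) this reads $\widetilde{\jmath}_x\big(\delta\mathfrak f(x,\lambda)\big)\cdot\big(\kappa_x\,\mathrm d\beta_x(1)\xi\big)=0$. By (F2) the map $x\mapsto\kappa_x\circ\mathrm d\beta_x(1)$, defined a priori on $\mathfrak M'$, extends to a continuous section $b$ of $\mathrm{Lin}(\mathfrak g,\mathcal Y)$ over $\mathfrak M$; by continuity of $\delta\mathfrak f$ and $\widetilde{\jmath}$ and density of $\mathfrak M'$, the identity $\widetilde{\jmath}_x(\delta\mathfrak f(x,\lambda))\cdot b_x\xi=0$ then persists for \emph{every} $x\in\mathfrak M$ (this is the crucial point, as it needs no differentiability of the action at $x$). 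Choosing $\xi_1,\dots,\xi_d$ with $v_i:=\mathrm d\beta_{x_0}(1)\xi_i$ a basis of $\ker L$ and $w_i:=\mathfrak i_{x_0}(v_i)$ a basis of the complement above, (F1) and (D3) give $\widetilde{\jmath}_{x_0}(w_i)\cdot b_{x_0}\xi_j=\langle v_i,v_j\rangle_{x_0}$, the Gram matrix of the $v_i$, which is positive definite.

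Next I would build the slice $S$: a $\mathcal C^k$-submanifold through $x_0$ with $T_{x_0}S=W$, a fixed closed complement of $\ker L$ in $T_{x_0}\mathfrak M$. Letting $P\colon\mathcal E_{x_0}\to\im L$ be the projection along $\mathfrak i_{x_0}(\ker L)$, the map $(x,\lambda)\mapsto P\,\delta\mathfrak f(x,\lambda)$ on $S\times\Lambda$ has first partial derivative $P\circ L|_W=L|_W\colon W\to\im L$ at $(x_0,\lambda_0)$, an isomorphism. The ordinary implicit function theorem yields a $\mathcal C^k$-map $\sigma\colon\Lambda_0\to S$ with $\sigma(\lambda_0)=x_0$, the unique zero of $P\,\delta\mathfrak f(\cdot,\lambda)$ near $x_0$, satisfying $\delta\mathfrak f(\sigma(\lambda),\lambda)=\sum_i a_i(\lambda)\,w_i(\lambda)$, where $w_i(\lambda)\in\mathcal E_{\sigma(\lambda)}$ corresponds to $w_i$ under the trivialization. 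Pairing this with $b_{\sigma(\lambda)}\xi_j$ via $\widetilde{\jmath}$ annihilates the left-hand side by the orthogonality relation, leaving $\sum_i a_i(\lambda)\,\big[\widetilde{\jmath}_{\sigma(\lambda)}(w_i(\lambda))\cdot b_{\sigma(\lambda)}\xi_j\big]=0$; this coefficient matrix equals the positive-definite Gram matrix at $\lambda_0$ and varies continuously, so it stays invertible and forces $a_i(\lambda)\equiv0$. Hence $\delta\mathfrak f(\sigma(\lambda),\lambda)=0$, i.e.\ $\sigma(\lambda)$ is a genuine critical point, and by (C) so is every point of $G\cdot\sigma(\lambda)$; this already yields the ``if'' direction and the asserted smoothness of $\sigma$.

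Finally, for the ``only if'' direction I would take $(x,\lambda)\in V$ with $\partial_1\mathfrak f(x,\lambda)=0$ and show its orbit meets $S$: once $g\cdot x\in S$ for some $g$ near $1$, the point $g\cdot x$ is critical by (C), so $P\,\delta\mathfrak f(g\cdot x,\lambda)=0$, and by the local uniqueness above $g\cdot x=\sigma(\lambda)$, whence $x\in G\cdot\sigma(\lambda)$. The hard part --- and the only place the weak regularity genuinely bites --- is precisely the non-emptiness of $G\cdot x\cap S$: since $\beta_x$ is merely continuous for $x\notin\mathfrak M'$, transversality of $S$ to the orbit of $x_0$ does \emph{not}, via the inverse function theorem, guarantee that nearby (possibly non-smooth) orbits still cross $S$. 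I would instead argue by topological degree (the content of Proposition~\ref{thm:nonemptyintersection}): writing $\pi_N$ for the projection onto the $d$-dimensional direction $\ker L$, the continuous map $g\mapsto\pi_N(g\cdot x)$ is a small $\mathcal C^0$-perturbation of $g\mapsto\pi_N(g\cdot x_0)$, which is a local diffeomorphism of $G/G_{x_0}$ onto a neighborhood of $0$ and hence has nonzero local degree; homotopy invariance of the degree then produces $g$ with $\pi_N(g\cdot x)=0$, i.e.\ $g\cdot x\in S$. Saturating the slice neighborhood under $G$ furnishes the required $G$-invariant $V$, completing the equivalence.
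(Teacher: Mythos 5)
Your proposal is correct and follows essentially the same route as the paper's proof: the same splitting $\mathcal E_{x_0}=\im L\oplus\mathfrak i_{x_0}(\ker L)$ obtained from Hessian symmetry plus (D3), the same continuity extension of the invariance identity via (F2), the same slice-plus-projected-implicit-function-theorem construction, and the same topological degree argument (Proposition~\ref{thm:nonemptyintersection}) for the orbit--slice intersection, concluded with axiom (C). Your two deviations --- taking an arbitrary closed complement of $\ker L$ instead of the $\langle\cdot,\cdot\rangle_{x_0}$-orthogonal one, and killing the obstruction coefficients via invertibility of the Gram matrix along $\sigma(\lambda)$ rather than via the paper's open condition \eqref{eq:opencondition} in a full neighborhood of $x_0$ --- are immaterial, since the Gram-matrix invertibility is exactly the surjectivity of $\tau_x\circ B(x)$ that the paper verifies, and the trivial direction (critical implies $H=0$) plus uniqueness in the implicit function theorem supplies what your weaker version of claim (1) omits.
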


\begin{rem}
Condition (G2) is an equivariant {\em nondegeneracy condition} on the critical orbit $G\cdot x_0$.
\end{rem}

\begin{proof}
We will study a local problem first, and we will then use the group action for the proof
of the global statement. After suitable local charts and local trivialization of vector bundles
around the point $(x_0,\lambda_0)$ have been chosen, one can assume the following situation:
\begin{itemize}
\item $\mathfrak M$ is an open subset of a Banach space $X$, $\mathfrak M'$ is a dense subset
of $\mathfrak M$ which is endowed with a topology finer than the induced topology from $\mathfrak M$, and
$\Lambda$ is an open subset of another Banach space;
\item the group action on $\mathfrak M$ is described by a map $\mathcal U\ni(g,x)\mapsto g\cdot x\in\mathfrak M$,
with $\mathcal U$ an open neighborhood of $\{1\}\times\mathfrak M$ in $G\times\mathfrak M$. Such map
satisfies the obvious equalities given by group operations whenever\footnote{
For instance, the equality $g\cdot(h\cdot x)=(gh)\cdot x$ holds for all $g,h\in G$ and $x\in\mathfrak M$
such that $(h,x)\in\mathcal U$ and $(g,h\cdot x)\in\mathcal U$. In particular, given $x\in\mathfrak M$,
the equality must hold when $g$ and $h$ belong to some neighborhood of $1$ in $G$.
This could be formalized in terms of \emph{partial actions} of groups (or groupoids) on topological spaces, but this is not
relevant in the context of the present paper.} such equalities make sense in the open set $\mathcal U$;
\item the $\mathcal C^{k+1}$-function $\mathfrak f\colon \mathfrak M\times\Lambda\to\mathds R$ satisfies
$\mathfrak f(g\cdot x,\lambda)=\mathfrak f(x,\lambda)$ wherever such equality makes sense (as above);
\item the vector bundle $\mathcal E$ is replaced with the product $\mathfrak M\times\mathcal E_0$, where
$\mathcal E_0$ is a fixed Banach space (isometric to the typical fiber of $\mathcal E$);
\item $\mathfrak j\colon \mathfrak M\to\mathrm{Lin}(\mathcal E_0,X^*)$ is a $\mathcal C^k$-map
such that $\mathfrak j_x$ is injective for all $x\in\mathfrak M$;
\item $\mathfrak i\colon \mathfrak M\to\mathrm{Lin}(X,\mathcal E_0)$ is a $\mathcal C^k$-map such that
$\mathfrak j_x\circ\mathfrak i_x\colon X\to X^*$ is a (not necessarily complete) positive definite inner product
on $X$ (which implies in particular that $\mathfrak i_x$ is injective for all $x$);
\item the vector bundle $\mathcal Y$ is replaced with the product $\mathfrak M\times\mathcal Y_0$, where
$\mathcal Y_0$ is a fixed Banach space (isometric to the typical fiber of $\mathcal Y$);
\item $\widetilde\jmath\colon \mathfrak M\to\mathrm{Lin}(\mathcal E_0,\mathcal Y_0^*)$
and $\kappa\colon \mathcal M\to\mathrm{Lin}(X,\mathcal Y_0)$ are $\mathcal C^k$-maps taking values
in the set of injective linear maps, and such that $\kappa_x^*\circ\widetilde{\jmath}_x=\mathfrak j_x$
for all $x\in\mathfrak M$;
\item for all $x\in\mathfrak M$, the map $\beta_x$ is only defined on an open neighborhood of $1$ in $G$.
For $x\in\mathfrak M'$, its derivative at $1$ is a linear map $\mathrm d\beta_x(1)\colon \mathfrak g\to X$
that depends continuously on $x$, relatively to the finer topology of $\mathfrak M'$;
\item the map $\mathfrak M'\ni x\mapsto\kappa_x\circ\big[\mathrm d\beta_x(1)\big]\in\mathrm{Lin}(\mathfrak g,\mathcal Y_0)$
has a continuous extension to $\mathfrak M$;
\item $\partial_1\mathfrak f\colon \mathfrak M\times\Lambda\to X^*$ and $\delta\mathfrak f\colon \mathfrak M\times\Lambda\to\mathcal E_0$
are maps of class $\mathcal C^k$ such that
$\mathfrak j_x\big(\delta\mathfrak f(x,\lambda)\big)=\partial_1\mathfrak f(x,\lambda)$ for all $(x,\lambda)$;
\item the linear operator $L\colon X\to\mathcal E_0$ is given by the partial derivative $\partial_1(\delta\mathfrak f)(x_0,\lambda_0)$. It is a Fredholm operator of index zero, and $\ker L$ is given
by the image of the linear map $\mathrm d\beta_{x_0}(1)$.
\end{itemize}
Let $S=\mathrm{Im}\big(\mathrm d\beta_{x_0}(1)\big)^\perp$ be the closed subspace of $X$ given by the orthogonal complement of the subspace
$\mathrm{Im}\big(\mathrm d\beta_{x_0}(1)\big)$ relatively to the inner product $\langle\cdot,\cdot\rangle=\mathfrak j_{x_0}\circ\mathfrak i_{x_0}$.
Since $\mathrm{Im}\big(\mathrm d\beta_{x_0}(1)\big)$ is finite-dimensional,
we have a direct sum decomposition
\begin{equation}\label{eq:splittingX}
X=\mathrm{Im}\big(\mathrm d\beta_{x_0}(1)\big)\oplus S.
\end{equation}
Let us now introduce a finite-dimensional subspace $Y\subset\mathcal E_0$ by:
\[Y=\mathfrak i_{x_0}\big(\mathrm{ker}\, L\big);\]
we claim that $Y$ is complementary to the closed subspace $\im L$ in $\mathcal E_0$. In order to prove the claim,
we first observe that, using the fact that $\mathfrak i_{x_0}$ is injective and $L$ has index $0$, then the dimension of $Y$ equals the codimension of $\im L$. Thus, our claim is proved
if we show that $Y\cap\im L=\{0\}$. We have a commutative diagram:
\begin{equation}\label{eq:firstcommdiagr}
\xymatrix{X\ar[rr]^{L}\ar[rrd]_{\partial_1(\partial_1\mathfrak f)(x_0,\lambda_0)\quad}&&\mathcal E_0\ar[d]^{\mathfrak j_{x_0}}
\cr & &X^*}
\end{equation}
that is easily obtained differentiating the equality $\mathfrak j_x\big(\delta\mathfrak f(x,\lambda_0)\big)=
\partial_1\mathfrak f(x,\lambda_0)$ with respect to $x$ at $x=x_0$, keeping in mind that
$\delta\mathfrak f(x_0,\lambda_0)=0$. Observe that the second line in \eqref{eq:firstcommdiagr} is a symmetric operator,
and therefore we obtain:
\begin{equation}\label{eq:inclusionimageannihilator}
\mathfrak j_{x_0}\big(\im L\big)\subset\big[\mathrm{ker}\,(\mathfrak j_{x_0}\circ L)\big]^o=
\big(\mathrm{ker}\, L\big)^o,
\end{equation}
where $W^o$ denotes the annihilator of the subspace $W\subset X$ in $X^*$.
Now, if $v\in\ker L$ is such that $\mathfrak i_{x_0}(v)\in\im L$, then by
\eqref{eq:inclusionimageannihilator}, $\mathfrak j_{x_0}\circ\mathfrak i_{x_0}(v)\in \big(\mathrm{ker}\, L\big)^o$,
and in particular $\mathfrak j_{x_0}\big(\mathfrak i_{x_0}(v)\big)v=0$. By assumption (D3),
it follows that $v=0$, i.e., $Y\cap\im L=\{0\}$, and therefore:
\[\mathcal E_0=Y\oplus\im L.\]
Let $P\colon \mathcal E_0\to\im L$ be the projection relative to this direct sum decomposition
of $\mathcal E_0$. We define the function:
\begin{equation*}
\begin{aligned}
H\colon (\mathfrak M\cap S)\times\Lambda\longrightarrow\im L \\
H(x,\lambda)=P\big(\delta\mathfrak f(x,\lambda)\big)
\end{aligned}
\end{equation*}
observe that $H(x_0,\lambda_0)=0$.
Such map has the same regularity as $\delta\mathfrak f$. The derivative $\partial_1H(x_0,\lambda_0)$
is $P\circ L\vert_S=L\vert_S\colon S\to\im L$, and this is an isomorphism by assumption (G2) and
by \eqref{eq:splittingX}. We can therefore apply the standard implicit function theorem to the
equation $H(x,\lambda)=0$ around $(x_0,\lambda_0)$, obtaining a neighborhood $\Lambda_0$ of
$\lambda_0$ in $\Lambda$ and a $\mathcal C^k$-function $\sigma\colon \Lambda_0\to(\mathfrak M\cap S)$
with $\sigma(\lambda_0)=x_0$ and such that, given $(x,\lambda)$ in a neighborhood of $(x_0,\lambda_0)$
in $(\mathfrak M\cap S)\times\Lambda$, the equality $H(x,\lambda)=0$ holds if and only if $x=\sigma(\lambda)$.

In order to complete the proof of our theorem, we will show the following facts:
\begin{enumerate}
\item\label{itm:fact1} there exists a neighborhood $W$ of $(x_0,\lambda_0)$ in $\mathfrak M\times\Lambda$ such that,
given $(x,\lambda)\in W$, then $H(x,\lambda)=0$ if and only if $\partial_1\mathfrak f(x,\lambda)=0$;
\item\label{itm:fact2} if $x\in\mathfrak M$ is sufficiently close to $x_0$, then the orbit $G\cdot x$ has non-empty intersection
with $\mathfrak M\cap S$.
\end{enumerate}
By possibly reducing the domain of the function $\sigma$, we can assume that its graph is contained
in $W$. The first claim implies that given $(x,\lambda)$ sufficiently close to $(x_0,\lambda_0)$
in $(\mathfrak M\cap S)\times\Lambda$, the equality $\partial_1\mathfrak f(x,\lambda)=0$ holds if and
only if $x=\sigma(\lambda)$.
The second claim and assumption (C) will imply that given $(x,\lambda)$ sufficiently close to $G\cdot x_0\times\{\lambda_0\}$
in $\mathfrak M\times\Lambda$, then $\partial_1\mathfrak f(x,\lambda)=0$ if and only if $x\in G\cdot\sigma(\lambda)$.

In order to prove claim \eqref{itm:fact1}, we observe first that if $(x,\lambda)\in(\mathfrak M\cap S)\times\Lambda$ and
$\partial_1\mathfrak f(x,\lambda)=0$,
then $\delta\mathfrak f(x,\lambda)=0$ and therefore $H(x,\lambda)=0$. Conversely, let us show that
if $x\in\mathfrak M\cap S$ is near $x_0$, and $H(x,\lambda)=0$, then $\delta\mathfrak f(x,\lambda)=0$
(and thus also $\partial_1\mathfrak f(x,\lambda)=0$).
We observe that if $H(x,\lambda)=0$, then $\delta\mathfrak f(x,\lambda)\in
\mathfrak i_{x_0}\big(\mathrm{ker}\, L\big)$,
and thus, $\widetilde\jmath_x\big(\delta\mathfrak f(x,\lambda)\big)$ \emph{annihilates}
$\big[\widetilde\jmath_{x}\big(\mathfrak i_{x_0}(\ker L)\big)\big]_o$.
Here, given a subspace $Z\subset X^*$, the symbol $Z_o$ denotes the subspace of $X$
annihilated by $Z$. Denote by $B\colon \mathfrak M\to\mathrm{Lin}(\mathfrak g,\mathcal Y_0)$ the
continuous extension of the map $x\mapsto\kappa_x\circ\big[\mathrm d\beta_x(1)\big]$
defined in $\mathfrak M'$ (by (F2)); we claim that $\widetilde\jmath_x\big(\delta\mathfrak f(x,\lambda)\big)$
annihilates also the image of $B(x)$. This follows from the fact that
for $x\in\mathfrak M'$, $\partial_1\mathfrak f(x,\lambda)$ annihilates $\mathrm{Im}\big(\mathrm d\beta_x(1)\big)$, which
is easily seen by differentiating at $g=1$ the (constant) map $g\mapsto\mathfrak f\big(\beta_x(g),\lambda\big)$
(use (B1)), and a continuity argument.
Namely, observe that for $x\in\mathfrak M'$:
\begin{multline*}0=\partial_1\mathfrak f(x,\lambda)\circ\mathrm d\beta_x(1)=\mathfrak j_x\big(\delta\mathfrak f(x,\lambda)\big)
\circ[\mathrm d\beta_x(1)]\\=\kappa_x^*\big(\widetilde\jmath_x\big(\delta\mathfrak f(x,\lambda)\big)\big)\circ
[\mathrm d\beta_x(1)]=
\widetilde\jmath_x\big(\delta\mathfrak f(x,\lambda)\big)\circ\kappa_x\circ[\mathrm d\beta_x(1)].\end{multline*}
This says that the map:
\[\mathfrak M\ni x\longmapsto\widetilde\jmath_x\big(\delta\mathfrak f(x,\lambda)\big)\circ B(x)\in\mathfrak g^*\]
vanishes for $x\in\mathfrak M'$. Thus, by continuity, it vanishes identically, i.e.,
$\widetilde\jmath_x\big(\delta\mathfrak f(x,\lambda)\big)$ annihilates the image of $B(x)$.

To conclude the proof of \eqref{itm:fact1}, it suffices
to show that for $x\in\mathfrak M$ near $x_0$ one has:
\begin{equation}\label{eq:opencondition}
\mathrm{Im}(B(x))+\big[\widetilde{\jmath}_{x}\big(\mathfrak i_{x_0}(\ker L)\big)\big]_o=\mathcal Y_0.
\end{equation}
Using the continuity of $B$ and the fact that the subspace $\big[\widetilde{\jmath}_{x}\big(\mathfrak i_{x_0}(\ker L)\big)\big]_o$
has \emph{fixed} codimension in $\mathcal Y_0$ (i.e., it does not depend
on $x$), it follows that condition \eqref{eq:opencondition} is open\footnote{%
Details on the proof of openness of condition \eqref{eq:opencondition} are as follows.
Let $e_1,\ldots,e_r$ be a basis of $\ker L$; the covectors
$\omega_i=\widetilde\jmath_x\big(\mathfrak i_{x_0}(e_i)\big)$, $i=1,\ldots,r$,
are linearly independent in $\mathcal Y_0^*$. Consider the surjective
linear map $\tau_x\colon \mathcal Y_0\to\mathds R^r$
defined by $\tau_x(v)=\big(\omega_1(v),\ldots,\omega_r(v)\big)$.
The map $\mathfrak M\ni x\mapsto\tau_x\in\mathrm{Lin}(\mathcal Y_0,\mathds R^r)$ is continuous.
Condition \eqref{eq:opencondition} is equivalent to $\mathrm{Im}\big(B(x)\big)+\ker\tau_x=\mathcal Y_0$,
i.e., that the linear map $\tau_x\circ B(x):\mathfrak g\to\mathds R^r$ be surjective. This is clearly an open
condition.}
in $\mathfrak M$. Thus, it suffices
to show that it holds at $x=x_0$. Let us check equality \eqref{eq:opencondition} at $x=x_0$.
The dimension of $\mathrm{Im}(B(x_0))$ equals the dimension of $\mathrm{Im}\big(\mathrm d\beta_{x_0}(1)\big)$,
and this is equal to the dimension of $\ker L$, by assumption
(G2). Since $\widetilde\jmath_{x_0}$ and $\mathfrak i_{x_0}$ are injective, then
the codimension of $\big[\widetilde{\jmath}_{x}\big(\mathfrak i_{x_0}(\ker L)\big)\big]_o$ is equal
to the dimension of $\ker L$. Thus, it suffices to show:
\[\kappa_{x_0}\big(\mathrm{Im}\big(\mathrm d\beta_{x_0}(1)\big)\big)\cap\big[\widetilde{\jmath}_{x_0}\big(\mathfrak i_{x_0}(\ker L)\big)\big]_o=\{0\},\]
i.e.,
\begin{multline*}\mathrm{Im}\big(\mathrm d\beta_{x_0}(1)\big)\cap\kappa_{x_0}^{-1}
\big[\widetilde{\jmath}_{x_0}\big(\mathfrak i_{x_0}(\ker L)\big)\big]_o=
\ker L\cap\big[\kappa_{x_0}^*\circ\widetilde{\jmath}_{x_0}\big(\mathfrak i_{x_0}(\ker L\big)\big]_o\\
=
\ker L\cap\big[\mathfrak j_{x_0}\big(\mathfrak i_{x_0}(\ker L)\big)\big]_0=
\ker L\cap(\ker L)^\perp=
\{0\}.
\end{multline*}

It remains to show claim \eqref{itm:fact2}, i.e., equivalently,
that the set $G\cdot S$ contains an open neighborhood of $x_0$. Since we are not assuming differentiability
of the group action, this does not follow from a transversality argument. The correct argument in the
continuous case uses the notion of topological degree of a map, and it will be given separately in
Proposition~\ref{thm:nonemptyintersection}, below. In our case, this result is used setting $A=\mathfrak M$, $M=G$,
$N=\mathfrak M$, $P=S$, $m_0=1$, $a_0=x_0$ and $\chi$ is the action.
\end{proof}

\begin{prop}\label{thm:nonemptyintersection}
Let $N$ be a (possibly infinite-dimensional) Banach manifold, $P\subset N$ a Banach submanifold, $M$ a finite-dimensional manifold and $A$ a topological space. Assume $\chi\colon A\times M\to N$ is a continuous
function such that there exists $a_0\in A$ and $m_0\in M$ with:
\begin{itemize}
\item[(a)] $\chi(a_0,m_0)\in P$;
\item[(b)] $\chi(a_0,\cdot)\colon M\to N$ is of class $\mathcal C^1$;
\item[(c)] $\partial_2\chi(a_0,m_0)\big(T_{m_0}M\big)+T_{\chi(a_0,m_0)}P=T_{\chi(a_0,m_0)}N$.
\end{itemize}
Then, for $a\in A$ near $a_0$, $\chi(a,M)\cap P\ne\emptyset$.
\end{prop}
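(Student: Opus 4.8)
The plan is to convert the geometric statement ``$\chi(a,M)$ meets $P$'' into a finite dimensional zero finding problem and then to invoke the stability of the Brouwer degree under continuous perturbation of the parameter $a$. Write $p_0=\chi(a_0,m_0)$. Condition (c) says that the finite dimensional subspace $\partial_2\chi(a_0,m_0)\big(T_{m_0}M\big)$, together with $T_{p_0}P$, spans $T_{p_0}N$; hence $T_{p_0}P$ has finite codimension $k$ in $T_{p_0}N$, with $k\le\dim M$. Being closed of finite codimension, $T_{p_0}P$ is complemented, so near $p_0$ the submanifold $P$ admits a $\mathcal C^1$ submersion $\Phi:\mathcal O\to\mathds R^k$, defined on a neighborhood $\mathcal O$ of $p_0$ in $N$, with $P\cap\mathcal O=\Phi^{-1}(0)$ and $\ker\mathrm d\Phi(p_0)=T_{p_0}P$. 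For $(a,m)$ with $\chi(a,m)\in\mathcal O$, the condition $\chi(a,m)\in P$ is then equivalent to the finite dimensional equation $\Phi\big(\chi(a,m)\big)=0$.

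Next I would linearize at $(a_0,m_0)$. Define $F_a(m)=\Phi\big(\chi(a,m)\big)$ for $m$ near $m_0$ and $a$ near $a_0$. By (b), $F_{a_0}$ is $\mathcal C^1$, with $F_{a_0}(m_0)=0$, and by the chain rule and (c) the derivative $\mathrm dF_{a_0}(m_0)=\mathrm d\Phi(p_0)\circ\partial_2\chi(a_0,m_0):T_{m_0}M\to\mathds R^k$ is surjective. Fixing a chart of $M$ at $m_0$ (identifying a neighborhood of $m_0$ with a neighborhood of $0$ in the model space and $m_0$ with $0$) and a $k$-dimensional subspace $V\subset T_{m_0}M$ on which $\mathrm dF_{a_0}(m_0)$ restricts to an isomorphism onto $\mathds R^k$, I would restrict the maps $F_a$ to a small closed ball $\overline B\subset V$ centered at $0$, obtaining continuous maps $G_a=F_a\vert_{\overline B}:\overline B\to\mathds R^k$. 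By construction $G_{a_0}$ is $\mathcal C^1$ with $G_{a_0}(0)=0$ and $\mathrm dG_{a_0}(0)$ an isomorphism, so after shrinking $\overline B$ the origin is the only zero of $G_{a_0}$ in $\overline B$ and it is nondegenerate; hence $G_{a_0}$ does not vanish on $\partial B$ and the Brouwer degree satisfies $\deg(G_{a_0},B,0)=\pm1\neq0$.

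Finally I would run the degree stability argument in the parameter $a$. Since $\overline B$ is compact and $\chi$ is continuous on $A\times M$, the assignment $a\mapsto G_a\in C(\overline B,\mathds R^k)$ is continuous for the supremum norm; in particular $G_a\to G_{a_0}$ uniformly on $\overline B$ as $a\to a_0$. Writing $\delta=\min_{\partial B}\|G_{a_0}\|>0$, for $a$ in a suitable neighborhood of $a_0$ one has $\|G_a-G_{a_0}\|_\infty<\delta$, so $G_a$ does not vanish on $\partial B$ and, by the invariance of the degree under such perturbations, $\deg(G_a,B,0)=\deg(G_{a_0},B,0)=\pm1\neq0$. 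A nonzero degree forces a zero $m_a\in B$ of $G_a$, that is $\chi(a,m_a)\in P$, which is the desired conclusion.

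The step I expect to be the crux is the passage from the (generally) nonisolated zero set of $F_{a_0}$ --- a submanifold of dimension $\dim M-k$ whenever $\dim M>k$ --- to a setting in which the degree is defined and nonzero. This is precisely the point where one cannot simply invoke transversality together with the implicit function theorem, because the maps $\chi(a,\cdot)$ for $a\neq a_0$ are not assumed differentiable; it is resolved by cutting down to the transverse $k$-dimensional slice $V$, on which $m_0$ becomes an isolated nondegenerate zero amenable to degree theory. The only other mild subtlety is justifying that $a\mapsto G_a$ is continuous into $\big(C(\overline B,\mathds R^k),\|\cdot\|_\infty\big)$ using nothing beyond joint continuity of $\chi$ and compactness of $\overline B$, since $A$ carries no more structure than a topology; this is the standard fact that a continuous map on a product with one compact factor induces a continuous map into the space of continuous maps on that factor.
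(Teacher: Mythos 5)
Your proof is correct and follows essentially the same route as the paper's: both reduce the question, via coordinates adapted to $P$ (your submersion $\Phi$ plays exactly the role of the paper's projection $\pi$ in the local splitting $N=P\oplus\mathds R^d$), to a finite dimensional zero-finding problem, restrict to a $k$-dimensional subspace of $T_{m_0}M$ on which the linearization at $(a_0,m_0)$ is an isomorphism, and conclude by stability of the topological degree under continuous perturbation of the parameter $a$. The only differences are presentational: you work with the Brouwer degree on a ball and spell out the uniform-convergence estimate on $\partial B$, whereas the paper phrases the same fact as the degree of the induced sphere map and simply cites continuity of the degree.
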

\begin{proof}
Given a function $f\colon U\subset\mathds R^d\to\mathds R^d$ of class $\mathcal C^1$, where $U$ is an open
neighborhood of $0$, such that $f(0)=0$ and $\mathrm df(0)$ an isomorphism, then the induced
map $\widetilde f\colon \mathds S^{d-1}\to\mathds S^{d-1}$ has topological degree equal to $\pm1$.
Here, $\widetilde f$ is defined by $\widetilde f(x)=\Vert f(xr)\Vert^{-1}f(xr)$, where $r>0$ is
such that $0$ is the unique zero of $f$ in the closed ball $B[0;r]$ of $\mathds R^d$.

Now, if $A$ is any topological space, $f\colon A\times U\to\mathds R^d$ is continuous, and $a_0\in A$ is
such that $f(a_0,\cdot)$ is of class $\mathcal C^1$, $f(a_0,0)=0$ and $\partial_2f(a_0,0)$ is an isomorphism,
for $a$ near $a_0$, and $r>0$ sufficiently small, $0\in f\big(a,B[0;r]\big)$. This follows from the
continuity of the topological degree. The same holds for a function $f\colon A\times U\to\mathds R^d$,
where now $U$ is an open neighborhood of $0$ in $\mathds R^s$, with $s\ge d$, under the assumption
that $f(a_0,\cdot)$ be of class $\mathcal C^1$, $f(a_0,0)=0$, and $\partial_2f(a_0,0)$ be surjective.
Namely, it suffices to apply the argument above to the function obtained by restricting $f$ to a $d$-dimensional
subspace where $\partial_2f(a_0,0)$ is an isomorphism.

To prove the result, use local coordinates adapted to $P$ in $N$, and assume that $M$, $P$ and $N$
are Banach spaces, with $N=P\oplus\mathds R^d$,  $d\le s=\mathrm{dim}(M)$ is the codimension of $P$, and $m_0=0$.
In this situation, the result is obtained applying the argument above to the function
$f\colon A\times M\to\mathds R^d$ given by $f(a,m)=\pi\big(\chi(a,m)\big)$, where $\pi\colon N\to\mathds R^d$ is the
projection relative to the decomposition $N=P\oplus\mathds R^d$. Clearly, $f(a,m)=0$ if and only if
$\chi(a,m)\in P$. Assumption (a) implies
that $f(a_0,0)=0$, and assumption (c) implies that $\partial_2f(a_0,0)$ is surjective.
\end{proof}
\begin{rem}\label{thm:remlocalpartial}
In Theorem~\ref{thm:mainimplicitfunctiontheorem}, some assumptions on the group action can be weakened.
For instance, the result holds also for \emph{local} group actions, see Appendix~\ref{sec:localactions}.
This version of the equivariant implicit function theorem will be used in the constant mean curvature problem,
Subsection~\ref{subsec:cmc}.
Versions of the result for the so-called \emph{partial actions} of groups, or even for actions of groupoids,
semigroups, monoids, etc., are also possible.
\end{rem}
\end{section}

\begin{section}{Applications to geometric variational problems}\label{sec:applications}
We now describe concrete applications of our abstract result to three classic geometric variational problems:
 harmonic maps,
closed geodesics, and
constant mean curvature hypersurfaces,
corresponding to Theorems B, C and D in the Introduction, respectively.

\subsection{Harmonic maps}
Let $(M,\mathbf g)$ and $(\overline M,\overline{\mathbf g})$ be Riemannian manifolds.

\begin{defin}
A $\mathcal C^2$-map $\phi\colon M\to\overline M$
is said to be $(\mathbf g,\overline{\mathbf g})$-\emph{harmonic} if
\begin{equation}\label{eq:deflaplacian}
\Delta_{\mathbf g,\overline{\mathbf g}}(\phi):=
\mathrm{tr}(\widehat{\nabla}\mathrm d\phi)=0
\end{equation}
where $\widehat{\nabla}$ is the
connection on the vector bundle $TM^*\otimes\phi^*(T\overline{M})$ over $M$
induced by the Levi-Civita connections $\nabla$ of $\mathbf g$ and $\overline{\nabla}$ of $\overline{\mathbf g}$.
\end{defin}

\begin{rem}
Harmonic maps form a class that contains several geometrically important objects, see \cite{eellslemaire}.
For instance, if $\dim M=1$, harmonic maps $\phi\colon M\to\overline M$ are the geodesics of $\overline M$.
In particular, setting $M=\mathds S^1$, the previous statements in Subsection~\ref{subsec:closedgeods} regarding
closed geodesics of $\overline M$ can be reobtained (in the Riemannian case).
The harmonic variational problem is also related to the CMC problem described in Subsection~\ref{subsec:cmc}.
Namely, an isometric immersion $\phi\colon M\to\overline M$ is minimal if and only if it is harmonic.
In addition, setting $\overline M=\mathds R$, harmonic maps are simply harmonic
functions on $M$; and setting $\overline M=\mathds S^1$,
harmonic maps are canonically identified with the harmonic
$1$-forms on $M$ with integral periods.
\end{rem}

Henceforth, we assume compactness of the source manifold $M$ to use the
classic variational characterization of harmonic maps.
Let $\mathfrak M$ be
the Banach manifold $\mathcal C^{2,\alpha}(M,\overline M)$ consisting of all
maps $\phi\colon M\to\overline M$ that satisfy the $\mathcal C^{2,\alpha}$-H\"older condition.
Let $\Lambda$ be the open subset of the Banach space of symmetric $(0,2)$-tensors of class $\mathcal C^k$ on $M$, with $k\ge3$,
consisting of all positive-definite tensors, i.e., elements of $\Lambda$ are Riemannian metric tensors of class $\mathcal C^k$
on $M$. Set $\mathfrak f\colon \mathfrak M\times\Lambda\to\mathds R$,
\[\mathfrak f(\phi,\mathbf g)=\tfrac12\int_M\Vert\mathrm d\phi(x)\Vert_{HS}^2\,\mathrm{vol}_{\mathbf g},\]
where $\mathrm{vol}_{\mathbf g}$ is the volume form (or density)
of $\mathbf g$ and $\Vert\mathrm d\phi(x)\Vert_{HS}$ is
the Hilbert-Schmidt norm of the linear map $\mathrm d\phi(x)$.
For a given $\mathbf g_0\in\Lambda$, critical points of the map $\phi\mapsto\mathfrak f(\phi,\mathbf g_0)$ are precisely
the $(\mathbf g_0,\overline{\mathbf g})$--harmonic maps $\phi\colon M\to\overline M$.
For $\phi\in\mathfrak M$, the tangent space $T_\phi\mathfrak M$ is identified
with the Banach space $\mathcal C^{2,\alpha}(\phi^*T\overline{M})$ of all
$\mathcal C^{2,\alpha}$-H\"older vector fields along $\phi$. Given a such $V\in T_\phi\mathfrak M$,
the derivative $\partial_1\mathfrak f(\phi,\mathbf g)V$ is given by:
\allowdisplaybreaks
\begin{multline}\label{eq:firstderharm}
\partial_1\mathfrak f(\phi,\mathbf g)V=
\int_M \mathrm{tr}\left(\mathrm d\phi^*\overline{\nabla}V\right) \,\mathrm{vol}_\mathbf g\\=
\int_M\big[\mathrm{div}\big(\mathrm d\phi^*(V)\big)
-\overline{\mathbf g}\big(\Delta_{\mathbf g,\overline{\mathbf g}}(\phi),V\big)\big]\,
\mathrm{vol}_\mathbf g\stackrel{\text{Stokes}}=
-\int_M\overline{\mathbf g}\big(\Delta_{\mathbf g,\overline{\mathbf g}}(\phi),V\big)\,\mathrm{vol}_\mathbf g,
\end{multline}
where the trace is meant on the entries $\mathrm d\phi^*(\cdot)\overline{\nabla}_{(\cdot)}V$.

\begin{defin}
The correspondent
{\em Jacobi operator} $J$ along a $(\mathbf g,\overline{\mathbf g})$-harmonic map $\phi$ is the linear differential operator:
\begin{equation}\label{eq:harmjacobiop}
J_\phi(V)=-\Delta V+\mathrm{tr}\left(\overline{R}(\mathrm d\phi(\cdot),V)\mathrm d\phi(\cdot)\right),
\end{equation}
defined in $\mathcal C^{2,\alpha}(\phi^*T\overline{M})$. Here $\overline R$ is the
curvature tensor of $\overline{\mathbf g}$, and
$\Delta V$ is a vector field along $\phi$ uniquely defined by
\begin{equation}\label{eq:deltav}
\overline{\mathbf g}(\Delta V,W)=\mathrm{div}(\overline{\nabla}V^*)W -\overline{\mathbf g}(\overline{\nabla}V,\overline{\nabla}W),
\quad W\in \mathcal C^{2,\alpha}(\phi^*T\overline{M})
\end{equation}
i.e., $\Delta V(x)=\sum_i\left(\overline{\nabla}_{e_i}\overline{\nabla}V\right)e_i$,
where $(e_i)_i$ is an orthonormal basis of $T_xM$.
\end{defin}

\begin{defin}
A vector field $V$ that satisfies $J_\phi(V)=0$ will be called a {\em Jacobi field}.
\end{defin}

Observe that if $K$ is a Killing vector field, then $J_\phi(K\circ\phi)=0$.

\begin{defin}
A $(\mathbf g,\overline{\mathbf g})$-harmonic map $\phi\colon M\to\overline M$
is said to be {\em nondegenerate} if all Jacobi fields along $\phi$ are of the form $K\circ\phi$, where $K$ is Killing.
\end{defin}

Let $G$ be the isometry group $\mathrm{Iso}(\overline M,\overline{\mathbf g})$ of the target manifold,
acting on $\mathfrak M$ by left-composition. Clearly, the functional $\mathfrak f$
is invariant in the first variable under this action.\footnote{%
One should observe that the harmonic map functional is also invariant under the action of the isometry group
$\mathrm{Iso}(M,\mathbf g)$ of the source manifold $(M,\mathbf g)$, which acts by right-composition
in the space of maps from $M$ to $\overline M$. However, equivariance with respect to such action will not
be considered here. Namely, observe that, as the metric $\mathbf g$ varies, then clearly also the group $\mathrm{Iso}(M,\mathbf g)$
varies; thus, in order to deal with such equivariance, a formulation of the equivariant implicit function theorem for varying groups
is needed. The assumption of equivariant nondegeneracy in Theorem B restricts the result to the case
where $(M,\mathbf g)$ has discrete isometry group or, more generally, when given any Killing field $K$ of $(M,\mathbf g)$, then
the field $\mathrm d\phi(K)$ along $\phi$ is the restriction to $\phi(M)$ of some Killing field $\overline K$ of $(\overline M,\overline{\mathbf g})$.}
Using results from \cite{palais}, it is possible
to prove that this action is smooth, since it is given by left-composition with smooth maps (see also \cite{PicTau} for the non-compact case).
As a consequence, part of the technical arguments in Theorem~\ref{thm:mainimplicitfunctiontheorem} to deal with
low regularity assumptions are not necessary in this context.

\begin{defin}
Two harmonic maps $\phi_1$ and $\phi_2$ are called {\em geometrically equivalent} if
they are in the same $\mathrm{Iso}(\overline M,\overline{\mathbf g})$-orbit, i.e.,  if there exists an isometry $\psi\colon \overline M\to\overline M$ such that $\phi_2=\psi\circ\phi_1$.
\end{defin}

We are now ready for:

\begin{proof}[Proof of Theorem B]
All assumptions of Theorem~\ref{thm:mainimplicitfunctiontheorem} are satisfied by the
harmonic maps problem, using the following objects:
\begin{itemize}
\item $\mathfrak M'$ coincides with $\mathfrak M=\mathcal C^{2,\alpha}(M,\overline M)$;

\item $\mathcal E$ is the mixed vector bundle whose fiber $\mathcal E_\phi$ is $\mathcal C^{0,\alpha}(\phi^*T\overline M)$,
the Banach space of all $\mathcal C^{0,\alpha}$-H\"older vector fields along $\phi$,
endowed with the topology $\mathcal C^{2,\alpha}$ on the base and $\mathcal C^{0,\alpha}$ on the fibers;

\item $\mathfrak i$ is the inclusion, and $\mathfrak j$ is induced by the $L^2$-pairing that uses the inner product induced
by $\overline{\mathbf g}$,  and integrals taken with respect to the
volume form (or density) of some fixed auxiliary\footnote{%
Note that one cannot use the volume form of $\mathbf g$ in order to define $\mathfrak j$, because this metric
is \emph{variable} in the problem that we are considering.} Riemannian metric $\mathbf g_*$ on $M$;

\item given $\phi\colon M\to\overline M$ of class $\mathcal C^{2,\alpha}$ and a Riemannian metric tensor $\mathbf g$ on $M$,
$\delta\mathfrak f(\phi,\mathbf g)$ is given by $-\zeta_\mathbf g\cdot\Delta_{\mathbf g,\overline{\mathbf g}}(\phi)$, where
$\zeta_\mathbf g\colon M\to\mathds R^+$ is the positive $\mathcal C^{k}$ function satisfying $\zeta_\mathbf g\cdot\mathrm{vol}_{\mathbf g_*}=\mathrm{vol}_{\mathbf g}$, see
\eqref{eq:deflaplacian} and \eqref{eq:firstderharm};

\item $\mathcal Y=T\mathfrak M$, $\kappa$ is the identity map and $\widetilde\jmath$ is induced by the $L^2$-pairing, as above;

\item identifying the Lie algebra $\mathfrak g$ with the space of (complete) Killing vector fields
on $(\overline M,\overline{\mathbf g})$, for $\phi\in\mathfrak M$, the map $\mathrm d\beta_\phi(1)\colon\mathfrak g\to T_\phi\mathfrak M$
associates to a Killing vector field $\overline{K}$ the vector field $\overline{K}\circ\phi$ along $\phi$;

\item given a $(\mathbf g,\overline{\mathbf g})$-harmonic map $\phi\colon M\to\overline M$, the vertical projection
of the derivative $\partial_1(\delta\mathfrak f)(\phi,\mathbf g)$ is identified with $\zeta_\mathbf g\cdot J_\phi$, where $J_\phi$ is the
Jacobi operator  in \eqref{eq:harmjacobiop}. This is an elliptic second order partial differential operator and
$\zeta_\mathbf g\cdot J_\phi\colon \mathcal C^{2,\alpha}(\phi^*T\overline M)\to\mathcal C^{0,\alpha}(\phi^*T\overline M)$ is a
Fredholm operator of index zero, see \cite[Thm~1.1, (2)]{Whi2}.\qedhere
\end{itemize}
\end{proof}

\subsection{Closed geodesics}\label{subsec:closedgeods}

Let $M$ be an arbitrary manifold, let $\mathfrak M$ be the Banach manifold $\mathcal C^2(\mathds S^1,M)$
consisting of all closed curves of class $\mathcal C^2$ in $M$, let $\mathcal B$ be a
Banach space of symmetric $(0,2)$-tensors of class $\mathcal C^k$ on $M$, with $k\ge3$, and
let $\Lambda$ denote an open subset of $\mathcal B$ consisting of tensors that are everywhere nondegenerate
on $M$. Thus, elements of $\Lambda$ are (pseudo-)Riemannian metric tensors on $M$.
We also fix an auxiliary Riemannian metric $\mathbf g_R$ on $M$; this metric induces a positive definite
inner product and a norm $\|\cdot\|_R$ on each tangent and cotangent space to $M$, and on all tensor products of
these spaces. This will be used implicitly throughout whenever our constructions require the use
of a norm or of an inner product of tensors.\footnote{%
These norms can be used, e.g., to give a simple construction of the Banach space $\mathcal B$. Consider $\nabla^R$ the Levi--Civita
connection of $\mathbf g_R$. Then $\mathcal B$ may be taken as the
space of $(0,2)$-tensors $s$ of class $\mathcal C^k$ on $M$ that are \emph{$\mathbf g_R$-bounded}, i.e.,
such that $\|s\|_{\mathcal B}=\max_{1\leq i\leq k}\left\{\sup_{x\in M}\left\|(\nabla^R)^i s(x)\right\|_R\right\}<+\infty.$
}
Given a (pseudo-)Riemannian metric tensor $\mathbf g$ on $M$,
let us denote by $\mathrm T_\mathbf g$ the $\mathbf g_R$-symmetric $(1,1)$-tensor on $M$
defined by:
\begin{equation}\label{eq:def11tensor}
\mathbf g=\mathbf g_R(\mathrm T_\mathbf g\cdot,\cdot).
\end{equation}

Consider the smooth function $\mathfrak f\colon \mathfrak M\times\Lambda\to\mathds R$ given by:
\[\mathfrak f(\gamma,\mathbf g)=\tfrac12\int_{\mathds S^1}\mathbf g(\gamma',\gamma')\,\mathrm d\theta;\]
for a given $\mathbf g_0\in\Lambda$, the critical points of the map $\gamma\mapsto\mathfrak f(\gamma,\mathbf g_0)$
are precisely the periodic $\mathbf g_0$-geodesics on $M$. For $\gamma\in\mathfrak M$, the tangent
space $T_\gamma\mathfrak M$ is identified with the Banach space of all periodic
vector fields $V$ of class $\mathcal C^2$
along $\gamma$. Given such a $V\in T_\gamma\mathfrak M$, recall that the derivative $\partial_1\mathfrak f(\gamma,\mathbf g)V$
is given by:
\begin{equation}\label{eq:firstderivativegeo}
\partial_1\mathfrak f(\gamma,\mathbf g)V=\int_{\mathds S^1}\mathbf g\big(\gamma',\tfrac{\mathrm D^\mathbf g}{\mathrm d\theta}V\big)\,\mathrm d\theta,
\end{equation}
where $\frac{\mathrm D^{\mathbf g}}{\mathrm d\theta}$ is the
covariant derivative operator along $\gamma$ relative to the Levi-Civita connection $\nabla^\mathbf g$ of $\mathbf g$.

\begin{defin}
If $\gamma$ is a $\mathbf g$-geodesic, the \emph{Jacobi operator}
$J$ along $\gamma$ is the linear differential operator:
\begin{equation}\label{eq:Jacobioperator}
J(V)=\left(\frac{\mathrm D^\mathbf g}{\mathrm d\theta^2}\right)^2V+R^\mathbf g(\gamma',V)\gamma',
\end{equation}
defined in the space of $\mathcal C^2$-vector fields $V$ along $\gamma$.
Here $R^\mathbf g$ is the curvature tensor of the Levi-Civita connection of $\mathbf g$.
A Jacobi field along $\gamma$ is a vector field $V$ satisfying $J(V)=0$.
\end{defin}

\begin{defin}
A closed $\mathbf g$-geodesic $\gamma$ is said to be
\emph{nondegenerate} if the only periodic Jacobi fields along $\gamma$ are (constant) multiples
of the tangent field $\gamma'$.
\end{defin}

\begin{rem}
Nondegeneracy of all closed geodesics (including iterates) is a
\emph{generic} property in the set of (pseudo-)Riemannian metric tensors $\mathbf g$, see \cite{bettiol,bumpy}.
\end{rem}

\begin{defin}
Let $G$ be the circle $\mathds S^1$, acting on $\mathfrak M$ by rotation, i.e.,
by right-composition. This action is only continuous (and not differentiable),
but each $g\in G$ gives a diffeomorphism of $\mathfrak M$.
The stabilizer of every non-constant closed curve $\gamma$ in $M$
is a finite cyclic subgroup of $\mathds S^1$. When such stabilizer is trivial,
we say that $\gamma$ is \emph{prime}, i.e., it is not the iterate of some other closed curve
in $M$. If $n>1$ is the order of the stabilizer of a curve $\gamma$,
then $\gamma$ is the $n$-fold iterate of some prime closed curve
on $M$. Two closed curves $\gamma_1$ and $\gamma_2$ on $M$ belong to the
same $\mathds S^1$-orbit if and only if:
\begin{itemize}
\item[(a)] $\gamma_1(\mathds S^1)=\gamma_2(\mathds S^1)$;
\item[(b)] $\gamma_1$ and $\gamma_2$ have stabilizers of the same order.
\end{itemize}
When (a) and (b) are satisfied, we will say that $\gamma_1$ and $\gamma_2$ are \emph{geometrically
equivalent}.
\end{defin}

We are now ready for:

\begin{proof}[Proof of Theorem C]
All assumptions of Theorem~\ref{thm:mainimplicitfunctiontheorem} are satisfied by the
harmonic maps problem, using the following objects:
\begin{itemize}
\item $\mathfrak M'$ is the set $\mathcal C^3(\mathds S^1,M)$, endowed with the $\mathcal C^3$-topology;
\item $\mathcal E$ is the mixed vector bundle whose fiber $\mathcal E_\gamma$ is the Banach space
of all periodic continuous vector fields along $\gamma$, endowed with the topology $\mathcal C^2$ on the base
and $\mathcal C^0$ on the fibers;
\item $\mathfrak i$ is the inclusion, and $\mathfrak j$ is induced by the $L^2$-pairing (this uses the inner product given by $\mathbf g_R$);
\item $\mathcal Y$ is the mixed vector bundle whose fiber $\mathcal Y_\gamma$ is the Banach space
of all periodic $\mathcal C^1$-vector fields along $\gamma$, endowed with the topology $\mathcal C^2$ on the base
and $\mathcal C^1$ on the fibers;
\item $\widetilde{\jmath}$ is induced by the $L^2$-pairing (this uses the inner product given by $\mathbf g_R$);
\item $\kappa$ is the inclusion;
\item using the identification $\mathfrak g\cong\mathds R$, $\mathrm{Lin}(\mathfrak g,T\mathfrak M)\cong T\mathfrak M$
and $\mathrm{Lin}(\mathfrak g,\mathcal Y)\cong\mathcal Y$, for $\gamma\in\mathfrak M'$,
the map $\mathrm d\beta_\gamma(1)$ is the element $\gamma'\in T\mathfrak M$;
\item the map $\kappa\circ[\mathrm d\beta_\gamma(1)]$ has the same expression of $\mathrm d\beta_\gamma(1)$, where
now $\gamma\in\mathfrak M$ and $\gamma'\in\mathcal Y$;
\item the map $\delta\mathfrak f$ is defined by $\delta\mathfrak f(\gamma,\mathbf g)=
-\mathrm T_\mathbf g\big(\frac{\mathrm D^{\mathbf g}}{\mathrm d\theta}\gamma'\big)$, where
$\mathrm T_\mathbf g$ is defined in \eqref{eq:def11tensor}.
Note that $\frac{\mathrm D^{\mathbf g}}{\mathrm d\theta}\gamma'$ is a continuous vector field along $\gamma$, and:
\begin{multline*}
\mathfrak j_\gamma\big(\delta\mathfrak f(\gamma,\mathbf g)\big)V=\int_{\mathds S^1}\mathbf g_R\big(\delta\mathfrak f(\gamma,\mathbf g),V\big)\,\mathrm d\theta=
-\int_{\mathds S^1}\mathbf g_R\big(\mathrm T_\mathbf g(\tfrac{\mathrm D^{\mathbf g}}{\mathrm d\theta}\gamma'),V\big)\,\mathrm d\theta\\=
-\int_{\mathds S^1}\mathbf g\big(\tfrac{\mathrm D^{\mathbf g}}{\mathrm d\theta}\gamma',V\big)\,\mathrm d\theta=
\int_{\mathds S^1}\mathbf g\big(\gamma',\tfrac{\mathrm D^\mathbf g}{\mathrm d\theta}V\big)\,\mathrm d\theta=
\partial_1\mathfrak f(\gamma,\mathbf g)V
\end{multline*}
\item The derivative $\partial_1(\delta\mathfrak f)$ is given by:
\[\partial_1(\delta\mathfrak f)(\gamma,\mathbf g)V=-\big(\nabla_V^\mathbf g\mathrm T_\mathbf g\big)\big(\tfrac{\mathrm D^{\mathbf g}}{\mathrm d\theta}\gamma'\big)-
\mathrm T_\mathbf g\big(J(V)\big),\] where $V\in T_\gamma\mathfrak M$
and $J$ is the Jacobi operator \eqref{eq:Jacobioperator}.

The operator $J$ acting on the space of periodic fields of class $\mathcal C^2$
along $\gamma$ and taking values in the space of periodic continuous vector fields along $\gamma$ is a Fredholm
operator of index $0$, as it is a compact perturbation of an isomorphism.
Since the composition on the left with $\mathrm T_\mathbf g$ is an isomorphism, it follows that the operator
$V\mapsto \mathrm T_\mathbf g\big(J(V)\big)$ is a Fredholm operator of index $0$
from the space of periodic fields of class $\mathcal C^2$
along $\gamma$ to the space of periodic continuous vector fields along $\gamma$.
The operator $V\mapsto-\big(\nabla_V^\mathbf g\mathrm T_\mathbf g\big)\big(\frac{\mathrm D^{\mathbf g}}{\mathrm d\theta}\gamma'\big)$
from the space of $\mathcal C^2$-vector fields to the space of $\mathcal C^0$-vector fields is compact, as it is
continuous relatively to the $\mathcal C^0$-topology, and the inclusion $\mathcal C^2\hookrightarrow\mathcal C^0$ is compact.
Hence, $\partial_1(\delta\mathfrak f)(\gamma,\mathbf g)$ is Fredholm of index $0$.

\item If $\gamma\in\mathfrak M'$, then the orbit $\mathds S^1\cdot\gamma$ is a $\mathcal C^1$-submanifold
of $\mathfrak M$ which is diffeomorphic to $\mathds S^1$.
The tangent space $T_\gamma(\mathds S^1\cdot\gamma)\subset T_\gamma\mathfrak M$
is spanned by the tangent field $\gamma'$. Nondegeneracy of a critical orbit thus corresponds to
the nondegeneracy of the closed geodesic.\qedhere
\end{itemize}
\end{proof}

\subsection{CMC hypersurfaces}\label{subsec:cmc}
Using our main abstract result, we now prove Theorem D, which is an improvement of Proposition~\ref{thm:implfunctthmCMC}. More precisely, we employ a version of Theorem~\ref{thm:mainimplicitfunctiontheorem} for local actions, see Theorem~\ref{thm:localmainimplicitfunctiontheorem}. We need a technical assumption concerning the existence of an \emph{invariant volume functional}
around a given CMC embedding $x\colon M\hookrightarrow\overline M$. This will be a volume functional invariant
under left-compositions with isometries of the ambient space, see Definition~\ref{thm:definvariantvolumes}.
Examples where this assumption is satisfied are discussed in Appendix~\ref{sec:invariantvolumes}.
We stress that this assumption is indeed necessary, see Example~\ref{exa:counterexamplelackofvolume}.

\begin{proof}[Proof of Theorem D]
Consider the set $\mathfrak E(M,\overline M)$ of all \emph{unparameterized embeddings} of class $\mathcal C^{2,\alpha}$ of
$M$ into $\overline M$, i.e., the set of congruence classes of $\mathcal C^{2,\alpha}$ embeddings  $y\colon M\hookrightarrow\overline M$.
Such a set does not have a natural global differentiable structure, but it admits an atlas of charts that make it an infinite-dimensional topological manifold modeled on the Banach space $\mathcal C^{2,\alpha}(M)$, see \cite{AliPic10}.
Given a smooth embedding $y\colon M\to\overline M$, nearby congruence classes of embeddings are parameterized by sections of
the normal bundle of $y$, using the exponential map of $(\overline M,\overline{\mathbf g})$.
We will identify congruence classes of embeddings near $x$ with functions belonging to a neighborhood of $0$ in the Banach space
$\mathcal C^{2,\alpha}(M)$; for this identification the transversal orientation of $x(M)$ is used.
Let $\mathfrak M$ be a sufficiently small neighborhood of $x$ in $\mathfrak E(M,\overline M)$, identified
with a neighborhood of $0$ in the space $\mathcal C^{2,\alpha}(M)$.

Consider the isometry group $G=\mathrm{Iso}(\overline M,\overline{\mathbf g})$ of the ambient manifold. There is a \emph{local action} (see Appendix~\ref{sec:localactions}) of $G$ on $\mathfrak M$, defined as follows. If $y\colon M\hookrightarrow\overline M$ is an embedding near $x$ and $\phi$ is an isometry
of  $(\overline M,\overline{\mathbf g})$, then the action of $\phi$ on (the congruence class of) $y$ is given by the
(congruence class of the) left-composition $\phi\circ y$.
The domain of this action consists of pairs $(\phi,y)$ such that $\phi\circ y$ belongs to $\mathfrak M$; the axioms of
local actions are readily verified for this map.
The local action of $G$ on the set of unparameterized embedding is continuous (see \cite{AliPic10}), but the action is
differentiable only on the dense subset $\mathfrak M'$ of $\mathfrak M$ consisting of congruence classes of
embeddings of class $\mathcal C^{3,\alpha}$. The orbit of each of
these elements is a $\mathcal C^1$-submanifold of $\mathcal C^{2,\alpha}(M)$.

Given an embedding $y\colon M\hookrightarrow\overline M$,
denote by $\mathcal A(y)$ the volume of $M$ relatively to the volume form of the pull-back metric
$y^*(\overline{\mathbf g})$, and consider an invariant volume functional $\mathcal V$ defined in a neighborhood of $x$ in $\mathcal C^1(M,\overline M)$.
The values $\mathcal A(y)$ and $\mathcal V(y)$ do not depend on the parameterization of $y$, and
$\mathcal A$ and $\mathcal V$ define functions on $\mathfrak M$ that are smooth in every local chart, see
\cite{AliPic10} for details.
Finally, let $\Lambda$ be an open interval of $\mathds R$ containing $H_0$, and define
\begin{equation*}
\begin{aligned}
\mathfrak f &\colon \mathfrak M\times\Lambda \longrightarrow\mathds R \\
\mathfrak f(y,\lambda) &:=\mathcal A(y)+m\lambda \mathcal V(y)
\end{aligned}
\end{equation*}
It is well known that $\partial_1\mathfrak f(y,\lambda)=0$ if and only if $y$ is a CMC embedding
with mean curvature equal to $\lambda$. The second variation of $\mathfrak f(\,\cdot,H_0)$ at $x$ is identified with the
Jacobi operator $J_x$ in \eqref{eq:JacobiCMC}. In particular, $J_x\colon \mathcal C^{2,\alpha}(M)\to\mathcal C^{0,\alpha}(M)$ is a Fredholm operator of index 0, see~\cite[Thm~1.1, (2)]{Whi2}.
Since $\mathcal A$ and $\mathcal V$ are invariant under the local action of left-composition with elements of the isometry group
$G=\mathrm{Iso}(\overline M,\overline{\mathbf g})$, then so is the function $\mathfrak f$. Recall that $G$ is a Lie group, and is compact when $\overline M$ is compact.

The desired result now follows as a direct application of the equivariant implicit function theorem for local actions,
see Theorem~\ref{thm:localmainimplicitfunctiontheorem}; the objects described in the axioms (A), (B), (D) and (F) are
defined as follows for the CMC variational problem:
\begin{itemize}
\item $\mathcal E$ is the Banach space $\mathcal C^{0,\alpha}(M)$;
\item $\mathfrak i$ is the inclusion $\mathcal C^{2,\alpha}\hookrightarrow\mathcal C^{0,\alpha}$ and
$\mathfrak j$ is induced by the $L^2$-pairing $(f,g)\mapsto\int_Mf\cdot g\,\mathrm{vol}_\mathbf g$;
\item $\mathcal Y$ is the Banach space $\mathcal C^{1,\alpha}(M)$;
\item $\widetilde\jmath$ is induced by the $L^2$-pairing;
\item $\kappa$ is the inclusion;
\item identifying the Lie algebra $\mathfrak g$ with the space of (complete) Killing vector fields
on $(\overline M,\overline{\mathbf g})$, for $y\in\mathfrak M'$, the map $\mathrm d\beta_y(1)\colon \mathfrak g\to T_y\mathfrak M$
associates to a Killing vector field $\overline{K}$ the orthogonal component of $\overline{K}$ along $y$;
\item given a $\mathcal C^{2,\alpha}$-embedding $y$,  $\delta\mathfrak f(y,\lambda)$ is the mean curvature function
of $y$ (which is a $\mathcal C^{0,\alpha}$-function on $M$);
\item $\partial_1(\delta\mathfrak f)(x,H_0)$ is identified with the Jacobi operator $J_x$.
\end{itemize}
The assumptions of Theorem~\ref{thm:mainimplicitfunctiontheorem}/Theorem~\ref{thm:localmainimplicitfunctiontheorem} are easily verified, concluding the proof.
\end{proof}

The following examples show that neither the assumption on the existence of an invariant volume functional
nor the assumption on the transversal orientation in the case of minimal embeddings can be omitted
in Theorem D.
\begin{example}

\label{exa:counterexamplelackofvolume}
Consider $M=\mathds S^1$ and $\overline M=\mathds S^1\times\mathds S^1$ is the $2$-torus
endowed with the flat metric. The embedding $x\colon \mathds S^1\to\mathds S^1\times\mathds S^1$ given by $x(z)=(z,1)$, $z\in\mathds S^1$,
is obviously minimal (i.e., a geodesic). It is also easy to see that such embedding is nondegenerate, i.e., every periodic Jacobi field
along $x$ is the restriction of a Killing vector field. However, near $x$ there exists no embedding of $\mathds S^1$ into $\mathds S^1\times\mathds S^1$
with constant geodesic curvature different from zero. Namely, every constant geodesic curvature embedding should be the projection on
$\mathds S^1\times\mathds S^1$ of a circle in the plane $\mathds R^2$; such a projection is a curve with trivial homotopy class, hence
it cannot be close to $x$ in the $\mathcal C^1$-topology. Observe that, in this example, the image of $x$ is not contained in any open susbet
of $\mathds S^1\times\mathds S^1$ with trivial first cohomology space, and there exists no volume functional defined in
a neighborhood of $x$ in $\mathcal C^1(\mathds S^1,\mathds S^1\times\mathds S^1)$ which is
invariant under isometries of $\mathds S^1\times\mathds S^1$.
\end{example}

\begin{example}\label{thm:exaprojective}
We observe that the transverse orientability is a necessary condition in Theorem D. Namely, such condition is closed (and also open) relatively
to the $\mathcal C^1$-topology; thus, if $(x_H)_{H\in\left]-\varepsilon,\varepsilon\right[}$ is a continuous
family of CMC embeddings, such that each $x_H$ has mean curvature $H$, then $x_0$ must be transversely oriented.

For instance, consider the real projective plane $\mathds R P^2$ with the standard metric, and  the minimal (i.e., geodesic) embedding $x_0:\mathds S^1\hookrightarrow\mathds R P^2$ obtained by projecting
in $\mathds RP^2$ a minimal geodesic between two antipodal points in $\mathds S^2$. This is a nondegenerate
minimal embedding, which is not transversely oriented. The only CMC immersions of $\mathds S^1$ in
$\mathds S^2$ are \emph{parallel} to the equator. The corresponding immersions obtained in the
quotient $x_H\colon \mathds S^1\hookrightarrow\mathds R P^2$ are \emph{not} $\mathcal C^1$-close to $x_0$ since, for instance, the length
of $x_h$ tends to twice the length of $x_0$ as $H$ goes to $0$.
\end{example}

An alternative form of stating Theorem D uses the notion of \emph{rigidity} for a path
of CMC embeddings.

\begin{defin}\label{thm:deflocalrigidity}
Given a $1$-parameter family $x_s\colon M\hookrightarrow\overline M$, $s\in[a,b]$, of
CMC embeddings, we say that the family $X=\{x_s\}_{s\in[a,b]}$ is \emph{rigid}
if there exists an open neighborhood $\mathcal U$ of $X$ in $\mathcal C^{2,\alpha}(M,\overline M)$ such that any CMC embedding $x\colon M\hookrightarrow\overline M$
in $\mathcal U$ is isometrically congruent to some $x_s$.
We say that the family is \emph{locally rigid} at $s_0\in[a,b]$ if there exists
$\varepsilon>0$ such that, setting $I=[a,b]\cap[s_0-\varepsilon,s_0+\varepsilon]$,
the family $\{x_s\}_{s\in I}$ is rigid.
\end{defin}

\begin{cor}\label{thm:corrigidity}
Let $x_s\colon M\hookrightarrow\overline M$, $s\in[a,b]$, be a $\mathcal C^1$-family of
CMC embeddings, denote by $\mathcal H(s)$ the mean curvature of $x_s$, and let $s_0\in[a,b]$ be such that:
\begin{itemize}
\item $x_{s_0}$ is nondegenerate;
\item there exists an invariant volume functional in a $\mathcal C^1$-neighborhood of $x_{s_0}$;
\item $\mathcal H'(s_0)\ne0$.
\end{itemize}
Then, $X=\{x_s\}_{s\in[a,b]}$ is locally rigid at $s_0$.
\end{cor}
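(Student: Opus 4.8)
The plan is to derive this corollary directly from Proposition~\ref{thm:implfunctthmCMC_improved} by reparameterizing the given family $\{x_s\}$ via its mean curvature. The hypotheses of the corollary are precisely tailored so that $x_{s_0}$ satisfies the assumptions of the proposition: it is nondegenerate, and an invariant volume functional exists in a $\mathcal C^1$-neighborhood. (Transverse orientation is automatic if $H_0=\mathcal H(s_0)\ne0$; if $\mathcal H(s_0)=0$ one must additionally assume it, but since the hypotheses only require $\mathcal H'(s_0)\ne0$, transverse orientation of $x_{s_0}$ should be taken as implicitly included, as in the statement of Proposition~\ref{thm:implfunctthmCMC_improved}.) Setting $H_0=\mathcal H(s_0)$, Proposition~\ref{thm:implfunctthmCMC_improved} produces an $\varepsilon_0>0$ and a smooth family $H\mapsto x_H$, $H\in\left]H_0-\varepsilon_0,H_0+\varepsilon_0\right[$, of CMC embeddings with $\mathcal H(x_H)=H$, satisfying conclusions (a) and (b) of the proposition.

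The key step is to match the abstract family $\{x_H\}$ from the proposition with the \emph{given} concrete family $\{x_s\}$ near $s_0$. Here I would use the assumption $\mathcal H'(s_0)\ne0$: by the classical one-variable inverse function theorem, the $\mathcal C^1$-function $s\mapsto\mathcal H(s)$ is a local diffeomorphism near $s_0$, so there is an interval $I=[a,b]\cap[s_0-\varepsilon,s_0+\varepsilon]$ on which $s\mapsto\mathcal H(s)$ is a homeomorphism onto its image, a subinterval of $\left]H_0-\varepsilon_0,H_0+\varepsilon_0\right[$. For each $s\in I$, the embedding $x_s$ has mean curvature $\mathcal H(s)$ and is $\mathcal C^{2,\alpha}$-close to $x_{s_0}$; by conclusion (b) of the proposition, $x_s$ is isometrically congruent to $x_{\mathcal H(s)}$. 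Thus the given path and the constructed path coincide up to isometric congruence and reparameterization near $s_0$.

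It remains to verify the rigidity statement of Definition~\ref{thm:deflocalrigidity}: I must exhibit an open neighborhood $\mathcal U$ of $\{x_s\}_{s\in I}$ in $\mathcal C^{2,\alpha}(M,\overline M)$ such that every CMC embedding in $\mathcal U$ is isometrically congruent to some $x_s$, $s\in I$. The main obstacle is passing from the \emph{pointwise} neighborhood supplied by conclusion (b)---which is a priori only a neighborhood of the single embedding $x_{s_0}$---to a \emph{uniform} neighborhood of the whole compact arc $\{x_s\}_{s\in I}$. To handle this, I would shrink $\varepsilon$ so that the image of $I$ under $s\mapsto\mathcal H(s)$ is compactly contained in $\left]H_0-\varepsilon_0,H_0+\varepsilon_0\right[$, and invoke conclusion (b) at each point of the arc together with compactness of $I$: the family $x_{\mathcal H(s)}$ depends smoothly on $s$, hence the individual neighborhoods from (b) can be organized, via a finite subcover of the compact arc, into a single tubular neighborhood $\mathcal U$. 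Any CMC embedding $y\in\mathcal U$ then lies in one of these local neighborhoods, so by (b) it is isometrically congruent to some $x_H$ with $H$ in the image of $I$, and therefore to $x_s$ for the unique $s\in I$ with $\mathcal H(s)=H$. This establishes that $\{x_s\}_{s\in I}$ is rigid, i.e., that $X$ is locally rigid at $s_0$.
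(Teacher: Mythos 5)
There is a genuine gap, and it sits exactly where you chose to punt. Proposition~\ref{thm:implfunctthmCMC_improved} requires $x_{s_0}$ to be \emph{transversally oriented}, while Corollary~\ref{thm:corrigidity} does not list this among its hypotheses; you cannot resolve the mismatch by declaring that transverse orientation ``should be taken as implicitly included,'' because that proves a strictly weaker statement than the one asserted (and the case $\mathcal H(s_0)=0$, where the issue arises, is perfectly admissible under the hypothesis $\mathcal H'(s_0)\ne0$). The point of the corollary is that transversal orientability is a \emph{consequence} of the stated hypotheses, and the paper supplies the missing argument: since $\mathcal H'(s_0)\ne0$, the function $\mathcal H$ is nonconstant on every neighborhood of $s_0$, so there exist parameters $s\to s_0$ with $\mathcal H(s)\ne0$; each such $x_s$ is transversally oriented (its mean curvature vector orients the normal bundle, as in the footnote to Proposition~\ref{thm:implfunctthmCMC_improved}); and transversal orientability is a closed (indeed also open) condition in the $\mathcal C^1$-topology, as observed in Example~\ref{thm:exaprojective}. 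Since the family is $\mathcal C^1$ in $s$, $x_{s_0}$ is a $\mathcal C^1$-limit of transversally oriented embeddings, hence itself transversally oriented. Without this step your proof does not cover the minimal case.

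Apart from this, your strategy coincides with the paper's: apply Proposition~\ref{thm:implfunctthmCMC_improved} at $x_{s_0}$, invert $s\mapsto\mathcal H(s)$ by the one-variable inverse function theorem, and use conclusion (b) to identify $x_{s(H)}$ with $x_H$ up to isometric congruence; the paper compresses the final uniformity discussion into ``the local rigidity follows readily.'' One caution about your covering argument: conclusion (b) is stated only at the center $x_{s_0}$, so you are not entitled to ``invoke conclusion (b) at each point of the arc'' --- that would require knowing that the other embeddings $x_H$ are themselves nondegenerate (with invariant volume functionals), which the hypotheses do not provide. The cleaner route is to shrink $\varepsilon$ until the entire sub-arc $\{x_s\}_{s\in I}$ lies inside the single neighborhood furnished by (b) at $x_{s_0}$, and then control the (constant) mean curvature of a nearby CMC embedding so that the parameter $H$ produced by (b) lies in $\mathcal H(I)$.
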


\begin{proof}
The assumption $\mathcal H'(s_0)\ne0$ implies the existence of a $\mathcal C^1$-function \[\left]\mathcal H(s_0)-\varepsilon,\mathcal H(s_0)+\varepsilon\right[\ni H\longmapsto
s(H)\in\left]s_0-\delta,s_0+\delta\right[,\] with $\varepsilon,\delta>0$ small enough, such that $\mathcal H\big(s(H)\big)=H$ for all $H$.
Apply Theorem D to $x=x_{s_0}$, obtaining a new path $H\mapsto x_H$ of CMC embeddings. Note that $x_{s_0}$ must be transversely oriented, even in the case $\mathcal H(s_0)=0$;
namely, by the assumption $\mathcal H'(s_0)\ne0$, it follows that $\mathcal H$ is not constant in any neighborhood
of $s_0$, and thus $x_{s_0}$ is limit of transversely oriented embeddings.
By part (b) of Theorem D,  $x_{s(H)}$ must be isometrically congruent to $X_H$ for all $H$ near $H(s_0)$,
and the local rigidity follows readily.
\end{proof}

\subsubsection*{CMC embeddings of manifolds with boundary}
A result totally analogous to Theorem D holds in the case of codimension one CMC embeddings $x\colon M\hookrightarrow\overline M$ of manifolds $M$ with boundary $\partial M$. In this situation, one is interested in variations of $x$ that fix the boundary, and the corresponding infinitesimal variations are Jacobi fields that vanish on $\partial M$. We have the corresponding:

\begin{defin}
If $\partial M\neq\emptyset$, a CMC embedding $x\colon M\hookrightarrow\overline M$ is
\emph{nondegenerate} if every Jacobi field $f$ along $x$ that vanishes on $\partial M$ is of the form
$f=\overline{\mathbf g}(\overline{K},\vec n_x)$ for
some Killing field $\overline{K}$ of $(\overline M,\overline{\mathbf g})$ tangent to $x(\partial M)$.
\end{defin}

If $x$ is nondegenerate in this sense, then the implicit function theorem gives the
existence of a variation $(x_H)_{H\in\left]H_0-\varepsilon,H_0+\varepsilon\right[}$ of $x$ by CMC embeddings
$x_H\colon M\hookrightarrow\overline M$ such that $x_H\vert_{\partial M}=x\vert_{\partial M}$ for all $H$.
The proof of this fixed boundary version is totally analogous to that of Theorem D, \emph{mutatis mutandis}.
It is required the existence of a volume functional defined in a $\mathcal C^1$-neigborhood of $x$ in the
set of embeddings $y\colon M\hookrightarrow\overline M$ with fixed boundary, i.e., $y(\partial M)=x(\partial M)$, and
which is invariant under isometries of $(\overline M,\overline{\mathbf g})$ that preserve $x(\partial M)$.
Note that the group of such isometries is always compact, because the action of the isometry group is
proper and $\partial M$ is compact. In this case, invariant volume functionals can be obtained from
invariant primitives of the volume form, using an averaging procedure, see Appendix~\ref{sec:invariantvolumes}.

\begin{rem}
As to the variational framework, in the non-empty boundary case $\mathfrak M$ is the manifold
of fixed boundary unparameterized embeddings of $M$ into $\overline M$ of class $\mathcal C^{2,\alpha}$, which is modeled on the
Banach space $\mathcal C^{2,\alpha}_0(M,\mathds R)=\big\{f\in\mathcal C^{2,\alpha}(M,\mathds R):f\vert_{\partial M}\equiv0\big\}$.
Note that, when $M$ has boundary, the Jacobi operator $J_x\colon \mathcal C^{2,\alpha}_0(M,\mathds R)\to\mathcal C^0(M,\mathds R)$ is Fredholm of index $0$.
\end{rem}

\subsubsection*{Natural extensions of the CMC implicit function theorem}
Theorem D extends naturally to more general situations
involving hypersurfaces that are stationary for a parametric elliptic functional with a volume constraint,
like for instance hypersurfaces with constant \emph{anisotropic mean curvature}, see \cite{KoiPal}.
Such extension is quite straightforward, and will not be discussed here.
It is also interesting to point out that Theorem D can be extended
to the case of CMC \emph{immersions}, rather than embeddings. The question here is to endow the set
of unparameterized immersions with a local differential structure based on the exponential map of
the normal bundle. This is possible in the case of the so-called \emph{free immersions}, i.e.,
immersions $x\colon M\to\overline M$ with the property that the unique diffeomorphism $\phi$ of $M$
satisfying $x\circ\phi=x$ is the identity. This is the case, for instance, when there is some point
in the image of $x$ whose inverse image consists of a single point of $M$. Details can be found in \cite{CerMasMic}.

\end{section}

\appendix
\begin{section}{Local actions}\label{sec:localactions}
It is useful to have a version of the equivariant implicit function theorem for local actions of Lie groups
on manifold. Once more, the paradigmatic example to keep in mind is the CMC embedding problem, in which
one has a local action of the isometry group of the target manifold on a neighborhood of $0$ of a Banach space,
see Subsection~\ref{subsec:cmc}. One observes that, given the local character of the result, the proof of
Theorem~\ref{thm:mainimplicitfunctiontheorem} carries over to the case where the action of the Lie group
$G$ is only locally defined, in a sense that will be clarified in this appendix.

\begin{defin}
Let $G$ be a Lie group and let $\mathfrak M$ be a topological manifold.
A \emph{local action} of $G$ on $\mathfrak M$ is a continuous map $\rho\colon \mathrm{Dom}(\rho)\subset G\times\mathfrak M\to\mathfrak M$,
defined on an open subset $\mathrm{Dom}(\rho)\subset G\times\mathfrak M$ containing $\{1\}\times\mathfrak M$ satisfying:
\begin{itemize}
\item[(a)] $\rho(1,x)=x$ for all $x\in\mathfrak M$;
\item[(b)] $\rho\big(g_1,\rho(g_2,x)\big)=\rho(g_1g_2,x)$ whenever both sides of the equality are defined, i.e., for all those $x\in\mathfrak M$ and $g_1,g_2\in G$
such that $(g_2,x)\in\mathrm{Dom}(\rho)$, $\big(g_1,\rho(g_2,x)\big)\in\mathrm{Dom}(\rho)$ and $(g_1g_1,x)\in\mathrm{Dom}(\rho)$.
\end{itemize}
\end{defin}

\begin{rem}
The particular case of actions is when the domain $\mathrm{Dom}(\rho)$ coincides with the entire $G\times\mathfrak M$.
\end{rem}

\begin{rem}
Local actions can be restricted, in the sense that, given any open subset $\mathcal A$ of $\mathrm{Dom}(\rho)$ containing
$\{1\}\times\mathfrak M$, then the restriction $\rho\vert_\mathcal A$ of $\rho$ to $\mathcal A$ is again a local action of $G$ on $\mathfrak M$.
\end{rem}

Given a local action $\rho$ of $G$ on $\mathfrak M$, for $g\in G$, let $\rho_g$ denote the map $\rho(g,\cdot)$, defined
on an open (possibly empty) set $\mathrm{Dom}(\rho_g)=\mathrm{Dom}(\rho)\cap\{g\}\times\mathfrak M$.
The following properties follow easily from the definition:
\begin{lem}\label{thm:lemlocalactions}
Let $\rho\colon \mathrm{Dom}(\rho)\subset G\times\mathfrak M\to\mathfrak M$ be a local action of $G$ on $M$. Then:
\begin{itemize}
\item[(i)] for all $g\in G$, the map $\rho_g\colon\rho_g^{-1}\big(\mathrm{Dom}(\rho_{g^{-1}})\big)\to\rho_{g^{-1}}^{-1}\big(\mathrm{Dom}(\rho_g)\big)$ is a homeomorphism;
\item[(ii)] the set $\big\{(g,x)\in G\times\mathfrak M:x\in\rho_g^{-1}\big(\mathrm{Dom}(\rho_{g^{-1}})\big)\big\}$ is an open subset that contains $\{1\}\times\mathfrak M$;
in particular:
\item[(iii)] for all $x\in\mathfrak M$, there exists an open neighborhood $U_x$ of $1$ in $G$ such that for all $g\in U_x$, $x\in\rho_g^{-1}\big(\mathrm{Dom}(\rho_{g^{-1}})\big)$.
\end{itemize}
\end{lem}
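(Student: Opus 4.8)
The plan is to derive all three statements directly from the cocycle property (b), the openness of $\mathrm{Dom}(\rho)$, and the continuity of $\rho$; there is no deep ingredient involved, the content being careful bookkeeping of the domains on which the various compositions are defined.

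First I would prove (i). The crucial observation is that $\rho_{g^{-1}}\circ\rho_g=\mathrm{id}$ on the set $\rho_g^{-1}\big(\mathrm{Dom}(\rho_{g^{-1}})\big)$. Indeed, for $x$ in this set all three compositions required by (b) are legitimate: $(g,x)\in\mathrm{Dom}(\rho)$ since $x\in\mathrm{Dom}(\rho_g)$; $\big(g^{-1},\rho_g(x)\big)\in\mathrm{Dom}(\rho)$ by the defining condition $\rho_g(x)\in\mathrm{Dom}(\rho_{g^{-1}})$; and $(g^{-1}g,x)=(1,x)\in\mathrm{Dom}(\rho)$ always holds. Hence (b) gives $\rho_{g^{-1}}\big(\rho_g(x)\big)=\rho_1(x)=x$. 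Interchanging $g$ and $g^{-1}$ yields $\rho_g\circ\rho_{g^{-1}}=\mathrm{id}$ on $\rho_{g^{-1}}^{-1}\big(\mathrm{Dom}(\rho_g)\big)$. A short check shows that $\rho_g$ carries the source set into the target set (for $y=\rho_g(x)$ one has $\rho_{g^{-1}}(y)=x\in\mathrm{Dom}(\rho_g)$, so $y\in\rho_{g^{-1}}^{-1}(\mathrm{Dom}(\rho_g))$), so $\rho_g$ and $\rho_{g^{-1}}$ are mutually inverse bijections between the two sets. Both are continuous as restrictions of $\rho$, and both domains are open, being preimages of the open slices $\mathrm{Dom}(\rho_{g^{\mp1}})$ under the continuous maps $\rho_{g^{\pm1}}$; this gives the homeomorphism.

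For (ii) I would realize the set in question, call it $\mathcal O$, as a preimage. Unwinding the definition, $(g,x)\in\mathcal O$ precisely when $(g,x)\in\mathrm{Dom}(\rho)$ and $\big(g^{-1},\rho(g,x)\big)\in\mathrm{Dom}(\rho)$. Consider the map $\Psi:\mathrm{Dom}(\rho)\to G\times\mathfrak M$ given by $\Psi(g,x)=\big(g^{-1},\rho(g,x)\big)$, which is continuous because group inversion and $\rho$ are continuous. Then $\mathcal O=\Psi^{-1}\big(\mathrm{Dom}(\rho)\big)$ is open in $\mathrm{Dom}(\rho)$, hence open in $G\times\mathfrak M$. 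It contains $\{1\}\times\mathfrak M$ since for $g=1$ one has $\rho_1=\mathrm{id}$ and $\mathrm{Dom}(\rho_1)=\mathfrak M$, so every pair $(1,x)$ qualifies.

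Finally, (iii) is immediate from (ii): for fixed $x$ the slice $\{g\in G:(g,x)\in\mathcal O\}$ is open and contains $1$, and one takes $U_x$ to be this slice. The only point requiring any attention throughout is the verification that each invoked instance of the cocycle identity (b) has all of its compositions defined; this is where I would be most careful, but it is purely a matter of tracking the domain conditions, and I do not expect a genuine obstacle.
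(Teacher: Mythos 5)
Your proof is correct: the cocycle identity is invoked only where all three domain conditions hold (the key point being that $(g^{-1}g,x)=(1,x)$ always lies in $\mathrm{Dom}(\rho)$), the mutual-inverse and openness arguments are sound, and realizing the set in (ii) as $\Psi^{-1}\big(\mathrm{Dom}(\rho)\big)$ for $\Psi(g,x)=\big(g^{-1},\rho(g,x)\big)$ cleanly gives openness. The paper omits the proof entirely, stating that the lemma ``follows easily from the definition,'' and your careful domain bookkeeping is precisely the intended verification.
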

In view of (iii), one can define a map $\beta_x\colon \mathrm{Dom}(\beta_x)\subset G\to\mathfrak M$ on a neighborhood $\mathrm{Dom}(\beta_x)$ of $1$
in $G$, by $\beta_x(g)=\rho(g,x)$, cf. \eqref{eq:defbetax}. In particular, if $x\in\mathfrak M$ is such that $\beta_x$ is differentiable (at $1$),
then one has a well-defined linear map $\mathrm d\beta_x(1)\colon\mathfrak g\to T_x\mathfrak M$. A subset $C\subset\mathfrak M$ will be called $G$-invariant if $x\in C$
implies $\rho(g,x)\in C$ for all $g\in\mathrm{Dom}(\beta_x)$.

\begin{teo}\label{thm:localmainimplicitfunctiontheorem}
Theorem~\ref{thm:mainimplicitfunctiontheorem} holds when one replaces {\rm (A2)} with the assumption that a local
action $\rho$ of $G$ on $\mathfrak M$ is given, and {\rm (A3)} with the assumption that $\mathfrak f$ satisfies:
\begin{equation*}
\mathfrak f\big(\rho(g,x),\lambda\big)=\mathfrak f(x,\lambda),\quad\mbox{ for all }\,(g,x)\in\mathrm{Dom}(\rho).
\end{equation*}
In this situation, the conclusion is that there exist open subsets $\Lambda_0\subset\Lambda$ and $\mathfrak M_0\subset\mathfrak M$, with $\lambda_0\in\Lambda_0$ and
$x_0\in\mathfrak M_0$ and a  $\mathcal C^k$ map $\sigma\colon\Lambda_0\to\mathfrak M_0$ such that, for $(x,\lambda)\in\mathfrak M_0\times\Lambda_0$,
the identity $\partial_1\mathfrak f(x,\lambda)=0$
holds if and only if there exists $g\in G$, with $\big(\phi(\lambda),g\big)\in\mathrm{Dom}(\rho)$ such that $x=\rho\big(g,\sigma(\lambda)\big)$.
\end{teo}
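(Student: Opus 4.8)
The plan is to observe that the proof of Theorem~\ref{thm:mainimplicitfunctiontheorem} is already \emph{local} in character, so that almost nothing changes when the global action is replaced by a local one. After passing to local charts and trivializations exactly as in that proof, the only feature of the group action that entered was the single derivative $\mathrm d\beta_{x_0}(1)$ at the identity, together with the continuous extension furnished by (F2); all other ingredients are pure linear algebra in the model Banach spaces. By Lemma~\ref{thm:lemlocalactions}(iii), for a local action the map $\beta_x$ is defined on a neighborhood of $1\in G$ for every $x$, so $\mathrm d\beta_{x_0}(1)\colon\mathfrak g\to T_{x_0}\mathfrak M$ is well defined for $x_0\in\mathfrak M'$ and hypotheses (G1), (G2) make sense verbatim; here (B), (C), (E), (F) are read in the obvious local sense, all equivariance identities being imposed only on $\mathrm{Dom}(\rho)$. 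First I would carry over, word for word, the construction of the slice $S=\mathrm{Im}\big(\mathrm d\beta_{x_0}(1)\big)^\perp$, the splitting \eqref{eq:splittingX}, the finite dimensional complement $Y$ and the projection $P$, the auxiliary map $H$, and the application of the \emph{standard} implicit function theorem yielding $\sigma\colon\Lambda_0\to\mathfrak M\cap S$; none of these steps refers to group elements away from the identity.

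Next I would re-examine the two facts established in that proof. Fact~(1), that $H(x,\lambda)=0$ is equivalent to $\partial_1\mathfrak f(x,\lambda)=0$ near $(x_0,\lambda_0)$, goes through unchanged: its only appeal to the action is the identity $\partial_1\mathfrak f(x,\lambda)\circ\mathrm d\beta_x(1)=0$ for $x\in\mathfrak M'$, obtained by differentiating at $g=1$ the map $g\mapsto\mathfrak f\big(\rho(g,x),\lambda\big)$. By the modified invariance hypothesis this map is constant on $\mathrm{Dom}(\beta_x)$, hence locally constant about $1$, so its derivative at $g=1$ vanishes exactly as before; the subsequent continuity argument using the extension $B$ of (F2) and the openness of condition \eqref{eq:opencondition} is then untouched.

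The one point that genuinely requires care, and which I expect to be the main obstacle, is Fact~(2): that the \emph{local} orbit of $x$ meets $\mathfrak M\cap S$ for $x$ near $x_0$. Here Proposition~\ref{thm:nonemptyintersection} is stated for a map $\chi$ defined on all of $A\times M$, whereas the local action $\chi(x,g)=\rho(g,x)$ is defined only on $\mathrm{Dom}(\rho)$. I would resolve this by applying the proposition with the finite dimensional manifold taken to be a small open ball $B\ni1$ in $G$, with $A=\mathfrak M_0$ a neighborhood of $x_0$, $N=\mathfrak M$, $P=S$, $m_0=1$ and $a_0=x_0$. Since $\mathrm{Dom}(\rho)$ is open and contains $(1,x_0)$, one may shrink $B$ and $\mathfrak M_0$ so that $B\times\mathfrak M_0\subset\mathrm{Dom}(\rho)$, whence $\chi$ is defined and continuous on $\mathfrak M_0\times B$; hypothesis (b) concerns only the behavior of $\beta_{x_0}$ near $1$, and hypothesis (c) at $(x_0,1)$ is precisely the splitting \eqref{eq:splittingX}. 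This is harmless because the proof of Proposition~\ref{thm:nonemptyintersection} is a degree computation confined to a ball about $m_0$, so restricting from $G$ to $B$ costs nothing. The conclusion then produces, for $x$ near $x_0$, a group element $g$ near $1$ with $\rho(g,x)\in S$; note that only elements near the identity are ever exhibited, which is exactly what a local action can guarantee.

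Finally I would globalize using the local analogue of (C): if $\partial_1\mathfrak f(x,\lambda)=0$ then $\partial_1\mathfrak f\big(\rho(g,x),\lambda\big)=0$ for all admissible $g$, which follows from the local forms of (E1)--(E2) exactly as in Lemma~\ref{thm:criticalorbit}, every identity being read on $\mathrm{Dom}(\rho)$. Combining this with Facts~(1) and~(2) yields, for $(x,\lambda)\in\mathfrak M_0\times\Lambda_0$, that $\partial_1\mathfrak f(x,\lambda)=0$ holds if and only if there is $g\in G$ with $\big(g,\sigma(\lambda)\big)\in\mathrm{Dom}(\rho)$ and $x=\rho\big(g,\sigma(\lambda)\big)$, which is the asserted conclusion. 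The only substantive difference from Theorem~\ref{thm:mainimplicitfunctiontheorem} is that the global orbit $G\cdot\sigma(\lambda)$ is everywhere replaced by the \emph{local} orbit, consistent with the fact that the intersection argument only ever furnishes group elements close to the identity.
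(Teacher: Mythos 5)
Your proposal follows essentially the same route as the paper's own proof: observe that the argument is local, carry over the slice construction, the splitting \eqref{eq:splittingX} and Fact~(1) verbatim, and replace the orbit-intersection step (Fact~(2)) by an application of Proposition~\ref{thm:nonemptyintersection} to the restriction of $\rho$ to a product neighborhood of $(1,x_0)$. In fact your assignment of roles ($A=\mathfrak M_0$ as the topological space, a ball $B\ni 1$ in $G$ as the finite dimensional manifold, $a_0=x_0$, $m_0=1$) is the one consistent with the statement of Proposition~\ref{thm:nonemptyintersection}; the paper's write-up interchanges the labels $A$ and $M$ at this point, but then uses the conclusion in exactly your form.

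There is, however, one step you glossed over, and it is precisely where local actions differ from global ones. Requiring only $B\times\mathfrak M_0\subset\mathrm{Dom}(\rho)$ guarantees that $\rho(g,x)$ is defined for $g\in B$ and $x\in\mathfrak M_0$, but not that $x$ can be recovered from the slice point: your final biconditional needs $x=\rho\big(g^{-1},\rho(g,x)\big)$, and before the composition axiom of local actions can be invoked (it holds only ``whenever both sides are defined'') one must know that $\big(g^{-1},\rho(g,x)\big)\in\mathrm{Dom}(\rho)$. The paper secures this by choosing the product neighborhood inside the open set $\big\{(g,x)\in G\times\mathfrak M:\ x\in\rho_g^{-1}\big(\mathrm{Dom}(\rho_{g^{-1}})\big)\big\}$ of Lemma~\ref{thm:lemlocalactions}, part (ii), on which $\rho_{g^{-1}}\circ\rho_g=\mathrm{id}$ by part (i). Your argument is repaired by one more shrink: take $B$ symmetric and, using continuity of $\rho$ at $(1,x_0)$, arrange $\rho(B\times\mathfrak M_0)\subset\mathfrak M_1$ with $B\times\mathfrak M_1\subset\mathrm{Dom}(\rho)$; as written, though, the forward implication of your concluding equivalence is not fully justified. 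A second, cosmetic point: the invariance of the critical set need not be derived from ``local (E1)--(E2)'', which are not among the hypotheses of the statement; hypothesis (C), read in the local sense defined in the appendix (invariance under $\rho(g,\cdot)$ for $g\in\mathrm{Dom}(\beta_x)$), is assumed directly and is exactly what your globalization step uses.
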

\begin{proof}
The proof of Theorem~\ref{thm:mainimplicitfunctiontheorem} carries over to this case, with minor modifications.
One constructs a submanifold $S\subset\mathfrak M$ through $x_0$ with the property that $T_{x_0}S\oplus\mathrm{Im}\big(\mathrm d\beta_{x_0}(1)\big)=T_{x_0}\mathfrak M$,
and considers the restriction of $\mathfrak f$ to the product $S\times\Lambda$. The only question that needs an extra argument here is to show that the
set $\rho\big((G\times S)\cap\mathrm{Dom}(\rho)\big)$ is a neighborhood of $x_0$; this is the analogue of claim \eqref{itm:fact2}
in the proof of Theorem~\ref{thm:mainimplicitfunctiontheorem} (see page \pageref{itm:fact2}).
Once again, this follows as an application of Proposition~\ref{thm:nonemptyintersection}, used in the following setup:
$A$ is an open neighborhood of $1$ in $G$, $M$ is an open neighborhood of $x_0$ in $\mathfrak M$, these open subsets being chosen in such a way that
the product $A\times M$ be contained in the open set $\big\{(g,x)\in G\times\mathfrak M:x\in\rho_g^{-1}\big(\mathrm{Dom}(\rho_{g^-1})\big)\big\}$, see
part (ii) of Lemma~\ref{thm:lemlocalactions}. Set $N=\mathfrak M$, $P=S$, $a_0=1$ and $m_0=x_0$; the function $\chi$ is the restriction
of $\rho$ to $A\times M$. The conclusion of Proposition~\ref{thm:nonemptyintersection} says that for all $x\in M$, there exists $g\in A$ such that
$x\in\rho_g^{-1}\big(\mathrm{Dom}(\rho_{g^{-1}})\big)$ and $\rho(g,x)\in S$; since $\rho(g,x)\in\mathrm{Dom}(\rho_{g^-1})$, then $x=\rho\big(g^{-1},\rho(g,x)\big)\in\rho\big((G\times S)\cap\mathrm{Dom}(\rho)\big)$,
i.e., $\rho\big((G\times S)\cap\mathrm{Dom}(\rho)\big)$ contains the open subset $M\ni x_0$.
The rest of the proof of Theorem~\ref{thm:mainimplicitfunctiontheorem} can now be repeated \emph{verbatim}.
\end{proof}
\end{section}

\begin{section}{Invariant volume functionals}
\label{sec:invariantvolumes}
A technical assumption made in Theorem D concerns the existence of a generalized volume
functional $\mathcal V$ which is invariant under left-composition with isometries of $(\overline M,\overline{\mathbf g})$.
Let us consider a compact differentiable manifold $M$, possibly with boundary $\partial M$, and a Riemannian manifold $(\overline M,\overline{\mathbf g})$,
with $m=\mathrm{dim}(M)=\mathrm{dim}(\overline M)-1$.
\begin{defin}\label{thm:definvariantvolumes}
Let $\mathcal U$ be an open subset of embeddings $x\colon M\hookrightarrow\overline M$. An \emph{invariant volume functional} on $\mathcal U$
is a real valued function $\mathcal V\colon \mathcal U\to\mathds R$ satisfying:
\begin{itemize}
\item[(a)] $\mathcal V(x)=\int_Mx^*(\eta)$, where $\eta$ is an $m$-form defined on an open subset $U\subset\overline M$ such that
$\mathrm d\eta$ is equal to the volume form $\mathrm{vol}_{\overline{\mathbf g}}$ of $\overline{\mathbf g}$ in $U$;

\item[(b)] given $x\in\mathcal U$, for all $\phi$ isometry of $(\overline M,\overline{\mathbf g})$ sufficiently close to the identity
it is $\mathcal V(\phi\circ x)=\mathcal V(x)$.
\end{itemize}
If $M$ has boundary, the invariance property (b) is required to hold only for isometries $\phi$ near the identity, and preserving
$x(\partial M)$, i.e., $\phi\big(x(\partial M)\big)=x(\partial M)$.
\end{defin}
By (b), the generalized volume $\mathcal V(x)$ does not depend on the parameterization of $x$, i.e., $\mathcal V(x\circ\psi)=\mathcal V(x)$ for all diffeomorphisms
$\psi$ of $M$. Hence, $\mathcal V$ defines a smooth map (in local charts) in a neighborhood of $x$ in the set of unparameterized embeddings of $M$
into $\overline M$, see \cite{AliPic10}.

\begin{example}
Assume that the image $x(M)$ of $x$ is the boundary of a bounded open subset $\Omega$ of $\overline M$; note
that this is an open condition in the set of unparameterized embeddings.\footnote{%
If $x$ is transversely oriented, then such condition is equivalent to the fact that $x$
induces the null map in homology $H^1(M)\to H^1\big(\overline M,M\setminus x(M)\big)$.
In particular, the condition is stable by $\mathcal C^1$-small perturbations of $x$.}
For $y$ sufficiently close to $x$ in the $\mathcal C^1$-topology, one has $y(M)=\partial\Omega_y$ for some open bounded
subset $\Omega_y$ of $\overline M$. Setting $\mathcal V(y)=\mathrm{vol}(\Omega_y)$, i.e., the volume of this open bounded subset,
we have an invariant volume functional $\mathcal V$ (use Stokes' theorem).
In fact, when $x(M)=\partial\Omega$, any function $\mathcal V$ satisfying {\rm (a)} of Definition~\ref{thm:definvariantvolumes}
coincides with such volume functional, by Stokes' theorem.
\end{example}

\begin{example}
Assume that $\overline M$ is non-compact, so that its volume form $\mathrm{vol}_{\overline{\mathbf g}}$
 is exact, and assume that $G=\mathrm{Iso}(\overline M,\overline{\mathbf g})$
is compact. Let $\eta$ be a primitive of $\mathrm{vol}_{\overline{\mathbf g}}$, and set:
\[\eta^G=\int_G\phi^*(\eta)\,\mathrm d\phi,\]
the integral being taken relatively to a Haar measure of volume $1$ of $G$. Then, $\eta^G$ is a primitive of
$\mathrm{vol}_{\overline{\mathbf g}}$ which
is $G$-invariant. In particular, the functional $\mathcal V(x)=\int_Mx^*(\eta^G)$ is an invariant volume functional.
\end{example}
Using the compactness argument above, we obtain immediately:
\begin{cor}\label{thm:invvolumefunctboundary}
Assume that $M$ is a compact manifold with (non-empty) boundary $\partial M$, and that $\overline M$ is non-compact.
Given an embedding $x\colon M\hookrightarrow\overline M$,
there exists a volume functional $\mathcal V$ defined
in the set:
\[\Big\{y\colon M\hookrightarrow\overline M\ \text{embedding}:  y(\partial M)=x(\partial M)\Big\}\]
which is invariant under all isometries $\phi$ that preserve $x(\partial M)$.
\end{cor}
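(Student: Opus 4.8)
The plan is to reduce the statement to the averaging construction of the preceding Example, the one new ingredient being that the symmetry group relevant to the boundary case is \emph{compact} even though the full isometry group of a non compact $\overline M$ need not be. Writing $m=\dim M$, I would first recall that, since $\overline M$ is non compact, its volume form $\mathrm{vol}_{\overline{\mathbf g}}$ is exact; fix once and for all a globally defined $m$-form $\eta$ on $\overline M$ with $\mathrm d\eta=\mathrm{vol}_{\overline{\mathbf g}}$. The object to average $\eta$ over is the subgroup
\[G_\partial=\big\{\phi\in\mathrm{Iso}(\overline M,\overline{\mathbf g}):\phi\big(x(\partial M)\big)=x(\partial M)\big\}.\]

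The heart of the matter is the compactness of $G_\partial$, and this is the step I expect to be the main (indeed the only non-formal) point. The argument is exactly the one already invoked in Subsection~\ref{subsec:cmc} for the fixed boundary problem: the isometry group $\mathrm{Iso}(\overline M,\overline{\mathbf g})$ acts properly on $\overline M$, and $x(\partial M)$ is compact because $\partial M$ is. Properness makes the set $\{g:g\cdot x(\partial M)\cap x(\partial M)\ne\emptyset\}$ compact, and $G_\partial$ is a closed subgroup contained in this set; hence $G_\partial$ is a compact Lie group and carries a normalized Haar measure. This is precisely what fails for the full isometry group in the non compact non-boundary setting, and it is why the averaging trick of the preceding Example becomes available again here.

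Granted the compactness of $G_\partial$, I would set
\[\eta^{G_\partial}=\int_{G_\partial}\phi^*(\eta)\,\mathrm d\phi,\]
the integral converging since $G_\partial$ is compact. Because every isometry preserves the volume form, $\mathrm d\big(\phi^*\eta\big)=\phi^*(\mathrm d\eta)=\phi^*\mathrm{vol}_{\overline{\mathbf g}}=\mathrm{vol}_{\overline{\mathbf g}}$ for each $\phi$, so differentiating under the integral gives $\mathrm d\eta^{G_\partial}=\mathrm{vol}_{\overline{\mathbf g}}$; thus $\eta^{G_\partial}$ is again a primitive of the volume form, which is part (a) of Definition~\ref{thm:definvariantvolumes}. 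Invariance of the Haar measure then yields $\psi^*\eta^{G_\partial}=\eta^{G_\partial}$ for every $\psi\in G_\partial$.

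Finally I would define $\mathcal V(y)=\int_M y^*\big(\eta^{G_\partial}\big)$ on the set of embeddings $y$ with $y(\partial M)=x(\partial M)$, and verify invariance directly: for $\psi\in G_\partial$ one has $\mathcal V(\psi\circ y)=\int_M y^*\psi^*\eta^{G_\partial}=\int_M y^*\eta^{G_\partial}=\mathcal V(y)$, which is exactly the invariance required in part (b) of Definition~\ref{thm:definvariantvolumes} for the boundary case, and in fact gives the stronger invariance under \emph{all} of $G_\partial$ claimed in the statement. Everything after the compactness of $G_\partial$ is the same formal computation as in the preceding Example, \emph{mutatis mutandis}.
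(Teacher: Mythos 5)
Your proposal is correct and follows essentially the same route as the paper's own proof: compactness of the subgroup $G_\partial$ of isometries preserving $x(\partial M)$ (via properness of the isometry action and compactness of $\partial M$), followed by averaging a global primitive $\eta$ of $\mathrm{vol}_{\overline{\mathbf g}}$ over $G_\partial$ with normalized Haar measure to produce an invariant primitive and hence an invariant volume functional. You simply spell out the steps the paper leaves implicit (the properness argument for compactness, differentiation under the integral, and the final invariance check), so there is nothing to add.
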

\begin{proof}
The action of the isometry group $G=\mathrm{Iso}(\overline M,\overline{\mathbf g})$ on $\overline M$ is proper;
since $\partial M$ is compact, the closed subgroup $G_0$ of $G$ consisting of isometries $\phi$ satisfying $\phi\big(x(\partial M)\big)= x(\partial M)$
is compact. If $\eta$ is a primite of $\mathrm{vol}_{\overline{\mathbf g}}$ in $\overline M$, then one can average $\eta$ on $G_0$, obtaining a $G_0$-invariant
primitive $\eta^{G_0}$ of $\mathrm{vol}_{\overline{\mathbf g}}$. Clearly, the volume functional defined by $\eta^{G_0}$ is also $G_0$-invariant.
\end{proof}
Non-compactness of the ambient manifold $\overline M$ and compactness of its isometry group is a rather restrictive
assumption;  we will now determine more general conditions that guarantee the existence of invariant volume functionals.
\begin{lem}\label{thm:charinvariantvolumes}
Let $U\subset\overline M$ be an open subset and let $\eta$ be any primitive of $\mathrm{vol}_{\overline{\mathbf g}}$ on $U$.
Consider the volume functional $\mathcal V(y)=\int_My^*(\eta)$, defined on the set of embeddings $\mathfrak M(U)$ of $M$ into
$\overline M$ having image in $U$, and let
\[\rho\colon\mathrm{Dom}(\rho)\subset\mathrm{Iso}(\overline M,\overline{\mathbf g})\times\mathfrak M(U)
\longrightarrow\mathfrak M(U)\]
be the natural local action by left-composition with isometries of $(\overline M,\overline{\mathbf g})$.

Then, for all $\phi\in\mathrm{Iso}(\overline M,\overline{\mathbf g})$, the map $y\mapsto\mathcal V(y)-\mathcal V(\phi\circ y)$ is
locally constant on $\mathrm{Dom}(\rho_\phi)$. If $\eta-\phi^*(\eta)$ is exact in $U$ and $x\in\mathfrak M(U)$, then
for $\phi$ sufficiently close to the identity, $\mathcal V(\phi\circ x)=\mathcal V(x)$.
\end{lem}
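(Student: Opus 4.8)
The plan is to reduce both assertions to two elementary facts about the integration of $m$-forms over the compact manifold $M$: the integral of a \emph{closed} form is a homological (hence locally constant) invariant of the map, while the integral of an \emph{exact} form over $M$ produces only a boundary term. The first fact will give the local constancy in the first assertion, and the second will give the equality in the second assertion.

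First I would rewrite the quantity under study. Functoriality of pullback gives $\mathcal V(\phi\circ y)=\int_M(\phi\circ y)^*(\eta)=\int_M y^*\big(\phi^*\eta\big)$, so that
\[
\mathcal V(y)-\mathcal V(\phi\circ y)=\int_M y^*(\omega_\phi),\qquad \omega_\phi:=\eta-\phi^*\eta .
\]
The crucial point is that $\omega_\phi$ is a \emph{closed} $m$-form: by hypothesis $\mathrm d\eta=\mathrm{vol}_{\overline{\mathbf g}}$, and since $\phi$ is an isometry it preserves the volume form, so $\mathrm d(\phi^*\eta)=\phi^*(\mathrm d\eta)=\phi^*(\mathrm{vol}_{\overline{\mathbf g}})=\mathrm{vol}_{\overline{\mathbf g}}$, whence $\mathrm d\omega_\phi=0$. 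A point to keep in mind is that $\phi^*\eta$ is a priori defined only on $\phi^{-1}(U)$; for $y$ with image in $U$ and $\phi$ close to the identity the compact set $(\phi\circ y)(M)$ still lies in $U$, so all forms above live on a neighborhood of $y(M)$ and the manipulations are licit.

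For the first assertion I would show directly that $y\mapsto\int_M y^*(\omega_\phi)$ is locally constant. Since $\mathrm{Dom}(\rho_\phi)$ is open in a Banach manifold, any two nearby embeddings are joined by a smooth path $y_t$; writing $V_t$ for the variation field along $y_t$ and using the standard first-variation formula for the integral of a fixed form along a moving map together with $\mathrm d\omega_\phi=0$, one gets
\[
\frac{\mathrm d}{\mathrm dt}\int_M y_t^*(\omega_\phi)=\int_M y_t^*\big(\iota_{V_t}\mathrm d\omega_\phi\big)+\int_{\partial M}y_t^*\big(\iota_{V_t}\omega_\phi\big)=\int_{\partial M}y_t^*\big(\iota_{V_t}\omega_\phi\big).
\]
When $M$ is closed the boundary term is absent, so the derivative vanishes and the functional is locally constant; equivalently one may invoke that $\int_M y^*(\omega_\phi)$ is the pairing of the de Rham class $[\omega_\phi]$ with the cycle $y_*[M]$, a homotopy invariant. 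In the manifold-with-boundary situation one argues as elsewhere in the paper, restricting to embeddings with fixed boundary image and to variations with $V_t|_{\partial M}=0$, which again annihilates the boundary term.

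For the second assertion, assume $\omega_\phi=\mathrm d\zeta$ is exact on $U$. Taking $\phi$ close enough to the identity that $\phi\circ x\in\mathfrak M(U)$ (so that both sides are defined), Stokes' theorem yields
\[
\mathcal V(x)-\mathcal V(\phi\circ x)=\int_M x^*(\mathrm d\zeta)=\int_{\partial M}x^*(\zeta),
\]
which is zero for closed $M$, and which is zero in the fixed-boundary case once $\zeta$ is chosen compatibly with the boundary condition; hence $\mathcal V(\phi\circ x)=\mathcal V(x)$. I expect the main obstacle to be not the computation but the bookkeeping that makes it rigorous: ensuring $\phi^*\eta$ and $\zeta$ are defined on neighborhoods of the relevant images—which is exactly what forces the restriction to $\phi$ near the identity—and, when $\partial M\neq\emptyset$, arranging the fixed-boundary data so that the boundary integrals produced by Stokes genuinely vanish rather than merely remaining locally constant.
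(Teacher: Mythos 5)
Your proposal is correct and takes essentially the same route as the paper: both rest on the observation that $\eta-\phi^*(\eta)$ is closed (since isometries preserve $\mathrm{vol}_{\overline{\mathbf g}}$), deduce the local constancy from homotopy invariance of integrals of closed forms over $M$ (your first-variation computation is just a hands-on proof of that invariance, which you yourself note is equivalent and which the paper simply invokes for $\mathcal C^0$-close, hence homotopic, embeddings), and obtain the second assertion by applying Stokes' theorem to the exact form $\eta-\phi^*(\eta)$. The fixed-boundary bookkeeping you flag as delicate is treated no more carefully in the paper, which disposes of it in a one-line remark.
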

\begin{proof}
For all $\phi\in\mathrm{Iso}(\overline M,\overline{\mathbf g})$, $\phi^*(\eta)$ is a primitive of the volume form
$\mathrm{vol}_{\overline{\mathbf g}}$,
so that $\eta-\phi^*(\eta)$ is closed in its domain. If $x,y\in\mathfrak M(U)$ are $\mathcal C^0$-close,
then $x$ and $y$ are homotopic, hence, using Stokes' theorem, if $x$ and $y$ are in $\mathrm{Dom}(\rho_\phi)$,
$\int_Mx^*\big(\eta-\phi^*(\eta)\big)=\int_My^*\big(\eta-\phi^*(\eta)\big)$,
i.e., $\mathcal V(x)-\mathcal V(\phi\circ x)=\mathcal V(y)-\mathcal V(\phi\circ y)$.

If $\eta-\phi^*(\eta)$ is exact, then so is $x^*\big(\eta-\phi^*(\eta)\big)$, thus
$\int_Mx^*\big(\eta-\phi^*(\eta)\big)=0$, i.e., $\mathcal V(x)=\mathcal V(\phi\circ x)$.
Note that this equality holds also when $M$ has boundary, provided that $\phi\big(x(\partial M)\big)=x(\partial M)$.
\end{proof}

Lemma~\ref{thm:charinvariantvolumes} suggests how to construct invariant volume functionals.
The natural setup consists of a pair of open subset $U_1\subset U_2\subset\overline M$, an $m$-form $\eta$ on $U_2$ which is
a primitive of $\mathrm{vol}_{\overline{\mathbf g}}$ in $U_2$, with the following properties:
\begin{itemize}
\item $\phi(U_1)\subset U_2$ for $\phi$ in a neighborhood of the identity in $\mathrm{Iso}(\overline M,\overline{\mathbf g})$;
\item $\eta-\phi^*(\eta)$ is exact in $U_1$ for $\phi\in\mathrm{Iso}(\overline M,\overline{\mathbf g})$ near the identity.
\end{itemize}
\begin{cor}
Given objects $U_1$, $U_2$ and $\eta$ as above, the map $\mathcal V(x)=\int_Mx^*(\eta)$ is an invariant volume functional
in the set of embeddings $x\colon M\hookrightarrow\overline M$ having image contained in $U_1$.
\end{cor}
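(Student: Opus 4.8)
The plan is to verify directly the two defining conditions (a) and (b) of Definition~\ref{thm:definvariantvolumes} for the functional $\mathcal V(x)=\int_M x^*(\eta)$, restricted to embeddings $x$ with $x(M)\subset U_1$; condition (a) will be immediate, while condition (b) amounts to reproducing the exactness argument already used in Lemma~\ref{thm:charinvariantvolumes}.

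For condition (a) I would simply observe that, by hypothesis, $\eta$ is an $m$-form on the open set $U_2$ with $\mathrm d\eta=\mathrm{vol}_{\overline{\mathbf g}}$ on $U_2$; since $x(M)\subset U_1\subset U_2$, the form $\eta$ is defined on a neighborhood of the image of $x$ and the integral $\int_M x^*(\eta)$ makes sense. This is precisely the requirement of part (a), taking the open set $U$ there to be $U_2$.

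For condition (b), fix an embedding $x$ with $x(M)\subset U_1$ and an isometry $\phi$ of $(\overline M,\overline{\mathbf g})$ sufficiently close to the identity. First I would invoke the hypothesis $\phi(U_1)\subset U_2$ to guarantee that $\phi\circ x$ has image in $U_2$, so that $\mathcal V(\phi\circ x)=\int_M (\phi\circ x)^*(\eta)=\int_M x^*\big(\phi^*(\eta)\big)$ is well defined, even though $\phi\circ x$ need not lie in the domain of embeddings with image in $U_1$. Subtracting, one obtains
\[
\mathcal V(x)-\mathcal V(\phi\circ x)=\int_M x^*\big(\eta-\phi^*(\eta)\big).
\]
By the second hypothesis, $\eta-\phi^*(\eta)=\mathrm d\omega$ is exact on $U_1$ for $\phi$ near the identity; since $x(M)\subset U_1$, the pulled-back form $x^*\big(\eta-\phi^*(\eta)\big)=\mathrm d(x^*\omega)$ is exact on $M$, and Stokes' theorem yields $\mathcal V(x)=\mathcal V(\phi\circ x)$ when $M$ is closed. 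For $M$ with boundary one restricts, as in Definition~\ref{thm:definvariantvolumes}, to isometries $\phi$ that moreover preserve $x(\partial M)$, and the vanishing of the resulting boundary term is handled exactly as in the boundary remark at the end of the proof of Lemma~\ref{thm:charinvariantvolumes}.

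The one point that requires a little care is that Lemma~\ref{thm:charinvariantvolumes} is stated with $\eta$ defined, and $\eta-\phi^*(\eta)$ exact, on one and the same open set, whereas here $\eta$ lives on $U_2$ while exactness is assumed only on the smaller $U_1$; the lemma therefore cannot be quoted \emph{verbatim}. However, exactness enters only through its restriction along the image $x(M)\subset U_1$, so the computation of the lemma carries over unchanged. I expect this bookkeeping between the two nested open sets, together with the verification that $\mathcal V(\phi\circ x)$ stays defined thanks to $\phi(U_1)\subset U_2$, to be the only mild obstacle; everything else is a direct application of Stokes' theorem.
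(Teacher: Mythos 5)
Your proof is correct and follows essentially the same route as the paper: the paper simply cites the last statement of Lemma~\ref{thm:charinvariantvolumes}, whose proof is exactly the exactness-plus-Stokes computation you reproduce, and your extra bookkeeping with the nested sets $U_1\subset U_2$ (using $\phi(U_1)\subset U_2$ to make $\mathcal V(\phi\circ x)$ well defined) is precisely what makes that citation legitimate. Your deferral of the boundary case to the remark at the end of the lemma's proof matches the paper's own level of detail there.
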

\begin{proof}
It follows immediately from the last statement of Lemma~\ref{thm:charinvariantvolumes}.
\end{proof}
Notice that $\eta-\phi^*(\eta)$ is closed, hence if $U_1$ has vanishing de Rham cohomology in dimension $m$, then it is exact.
This observation provides a large class of examples of manifolds $(\overline M,\overline{\mathbf g})$ where it is possible to define local
invariant volume functionals.
\begin{example}
If $\overline M$ is a non-compact manifold whose $m^{th}$ de Rham cohomology space is zero, then the volume functional defined by
any primitive of $\mathrm{vol}_{\overline{\mathbf g}}$ is invariant under the (whole) isometry group. More generally, if $x\colon M\hookrightarrow\overline M$ is an embedding
that has image contained in an open subset whose $m^{th}$ de Rham cohomology space is zero, then there exists a volume functional
invariant under isometries near the identity, defined in an open neighborhood $\mathcal U$ of $x$ in the set of embeddings of $M$
into $\overline M$. In particular, this applies when $\overline M$ is $\mathds R^{m+1}$ or $\overline M=S^{m+1}$.
Manifolds of the form $\overline M^{m+1}=\mathds R^k\times N^{m+1-k}$, $k\ge1$, have trivial $m^{th}$ de Rham cohomology space.
Manifolds of the form $\overline M^{m+1}=S^k\times N^{m+1-k}$, $k\ge1$, have open dense subsets with trivial $m^{th}$ de Rham cohomology space.
\end{example}
\end{section}

\end{document}